\theoremstyle{break}
\numberwithin{equation}{section}
 \newtheorem{thm}{Theorem}[section]
 \newtheorem{cor}[thm]{Corollary}
 \newtheorem{lem}[thm]{Lemma}
 \newtheorem{prop}[thm]{Proposition}
 \newtheorem{claim}[thm]{Claim}
 \newtheorem{defn}[thm]{Definition}
 \newtheorem{rem}[thm]{Remark}
\newtheorem{assume}[thm]{ASSUMPTION}
\newtheorem*{thma}{THEOREM A}
 \newcommand{\set}[1]{\left\{#1\right\}}
\newcommand{\eps}{\varepsilon}
\newcommand \be     {\begin{equation}}
\newcommand \ee     {\end{equation}}
\newcommand {\RR} {\mathbb{R}}
 \newcommand{\half}{\frac{1}{2}}
 \newcommand{\pa}{\partial}
 \newcommand{\suml}{\sum\limits}
 \newcommand{\ylambk}{y;\lambda,k}
\thanks{We thank Prof. Assia Benabdallah for her comments and suggestions that significantly contributed to  this paper. }
\subjclass[2010]{Primary 35J25; Secondary 35P20, 58J50}
\keywords{concentration, non-concentration, layered media, eigenfunctions, second-order elliptic, divergence form, celerity, piecewise constant, bounded variation, well of profile, exponential decay}
\date{\today}
\begin{document}
\setcounter{thm}{0}
\title{\textsf{\textbf{\Large
    CONCENTRATION AND NON-CONCENTRATION OF  EIGENFUNCTIONS OF SECOND-ORDER ELLIPTIC  OPERATORS WITH A DIVERGENCE FORM IN LAYERED MEDIA}}}
 \author{M. Ben-Artzi}
\address{Matania Ben-Artzi: Institute of Mathematics, The Hebrew University, Jerusalem 91904, Israel}
\email{mbartzi@math.huji.ac.il}
\author{Y. Dermenjian}
\address{Yves Dermenjian:  CNRS, Centrale Marseille, I2M, Aix Marseille Univ., Marseille, France}
\email{yves.dermenjian@univ-amu.fr}
\begin{abstract}
Let $\Omega'\subset\mathbb{R}^{d},\,\,d=1,2,\ldots$ be an open bounded smooth domain, and $$\Omega=\Omega'\times (0,H)\subset\mathbb{R}^{d}\times\RR_+.$$
The coordinates in $\Omega$ are designated as $x=(x',y)\in\Omega'\times (0,H).$

The paper deals with the concentration (and non-concentration) properties (in sectors of $\Omega$) of the eigenfunctions of the self-adjoint second-order elliptic operator  
 $$
 \aligned
A=-\nabla\cdot\tilde{c}\,\nabla \,\,\mbox{in}\,\, L^{2}(\Omega;\,dx) \,\,\mbox{with domain }\,\,D(A) = \{v\in H^1_0(\Omega) ; \tilde{c}\,\nabla v\in H^1(\Omega)\}.\endaligned $$

 The coefficient $\tilde{c}>0$ is assumed to be bounded, but no continuity assumption is imposed. It is analogous to the square of the speed of sound in the wave equation, and $\sqrt{\tilde{c}}$ is commonly known in the physical literature as  the \textit{celerity}. This study deals with \textit{layered media}, namely,  $\tilde{c}(x)$ depends only on the single spatial coordinate $y\in (0,H),$  so that $\tilde{c}(x)=\tilde{c}(x',y)=c(y). $ 
 
 The eigenvalues of $A$ are partitioned (apart from a small residual set) into two disjoint infinite sets. The corresponding  
  eigenfunctions are labeled as $\mathfrak{F}_{G}$\, (guided) and  $\mathfrak{F}_{NG}$ \,(non-guided). Their asymptotic properties are expressed by  suitable estimates as the associated eigenvalues tend to infinity. The eigenfunctions in $\mathfrak{F}_{G}$ concentrate in ``wells'' of $c(y),$ subject to polynomial rate of decay away from the concentration sector. The non-concentrating eigenfunctions in $\mathfrak{F}_{NG}$   are oscillatory in every sector with non-decaying amplitudes. These results hold uniformly for families of celerities with a common bound on their total variation.
  
  The paper leaves as an open problem the question of non-concentration in the case of a function $c(y)$ which is continuous but not of bounded variation.

\end{abstract}
\maketitle
\section{\textbf{INTRODUCTION}}
\label{section-introduction}
%

Let $\Omega'\subset\mathbb{R}^{d},\,\,d=1,2,\ldots$ be an open bounded smooth domain. In particular, the eigenfunctions of $-\Delta$ in $\Omega'$ (with homogeneous boundary conditions) form a complete basis in $L^2(\Omega').$ Our domain of interest is
\be\label{eqomega}\Omega=\Omega'\times (0,H)\subset\mathbb{R}^{d}\times\RR_+.\ee

 The coordinates in $\Omega$ are designated as $x=(x',y)\in\Omega'\times (0,H).$
 
Observe that the regularity assumption on $\Omega'$ can be considerably relaxed, but this is not the main thrust of the present paper.

 In this paper we consider the self-adjoint second-order elliptic operator  (details are given in Section ~\ref{secsetup} below)
 \be\label{eqoperator}
 \aligned
A=-\nabla\cdot\tilde{c}\,\nabla \,\,\mbox{in}\,\, L^{2}(\Omega;\,dx) \,\,\mbox{with domain }\,\,D(A) = \{v\in H^1_0(\Omega) ; \tilde{c}\,\nabla v\in H^1(\Omega)\}.
\endaligned
\ee

 It is a sequel to our paper ~\cite{BBD:5} where we studied the operator $-\tilde{c}\Delta.$ Even though this operator is close to the  one  in ~\eqref{eqoperator}, there are significant differences between them, forcing very different methods of proof. To mention just a few instances, we point out to the treatment of guided waves by trace estimates in Theorem ~\ref{theo-general-divergence-c-:1} or the modified convexity of non-guided eigenfunctions in Theorem ~\ref{thmcyirreg-minamp}.

We always assume homogeneous Dirichlet boundary conditions.

Our study deals with \textit{layered media}, namely, the \textit{celerity} $\sqrt{\tilde{c}}$ depends only on the single spatial coordinate $y\in (0,H),$  so that $\tilde{c}(x)=\tilde{c}(x',y)=c(y). $ In the study of the associated wave equation $c(y)$ has the physical meaning of the  square of the variable speed of sound.

The reader is referred to ~\cite{grebenkov} for a survey of the geometrical structure of the eigenfunctions of the Laplacian, with very extensive bibliography.
   We refer to the recent paper ~\cite{lonzaga} for a study of the operator $A$ in the physical setting of the layered atmosphere. Some additional references to the physical literature will be given below.

The dependence of $\tilde{c}$ on a single coordinate results in studying the spectral properties of $A$ via an infinite set of ordinary differential operators with effective increasing potentials (See Remark ~\ref{reminfinitekl}).

 We  assume that
 \begin{equation}\label{eqassumptioncy}
\mbox{\bf(H)}\quad 0<c(y)\in L^{\infty}(\lbrack 0,H\rbrack), 0<c_{m}={\rm ess\, inf}\{c(y), y\in \lbrack 0,H\rbrack\}<c_{M}= {\rm ess\,sup}\{c(y), y\in \lbrack 0,H\rbrack\}.
 \end{equation}

  Fix $0<c_m<c_M,\,\,\eps>0.$ We consider the family of all functions $c(y)$ satisfying ~\eqref{eqassumptioncy}.
    \be\label{eq-def-K}\mathscr{K}=\set{c(y),\quad c_m={\rm ess\,inf}_{y} c(y)<c_M={\rm ess\,sup}_{y} c(y)}.\ee

 Generally speaking, the set of eigenfunctions is split into two categories: those composed of sequences of eigenfunctions (with increasing eigenvalues) involving concentration of  mass in proper subdomains of $\Omega,$ and those for which such concentration does not occur.

These two categories have been studied by physicists since a long time.  In general, the terminology used in the physical literature frequently refers to \textit{guided} or \textit{non-guided} waves, corresponding, respectively, to \textit{concentrating} or \textit{non-concentrating} modes. We shall use these terms interchangeably, as is appropriate in a particular context.

As far back as 1930, Epstein \cite{EP:1}  established (in unbounded domains) the existence of acoustic guided waves that are generalized eigenfunctions, i.e. not belonging to the domain of the operator, and are evanescent outside a ``guiding channel''. The underlying speeds were analytic functions depending on a single vertical coordinate. See ~\cite{Ped-Wh:1} for a more general study of Epstein's profiles. An extensive study of guided waves in the acoustic case can be found in ~\cite{Wil:1} and its bibliography.

We refer to the Introduction of our paper ~\cite{BBD:5} for a discussion  of  physical instances  (optical fibers, elasticity...) related to layered structure of the medium. 


The terms \textit{concentration} and \textit{non-concentration} do not always carry the same meaning when used by various authors. The following definition clarifies their meaning in this paper.

For an open set $\omega\subseteq\Omega$ and $v\in L^2(\Omega),$ define

 \be\label{eqdefRw}R_{\omega}(v) = \frac{\Vert v\Vert^{2}_{L^{2}(\omega)}}{\Vert v\Vert^{2}_{L^{2}(\Omega)}}.\ee
\begin{defn}
\label{def-concentration:1}
If $\set{v_j}_{j=1}^\infty\subseteq L^2(\Omega)$ is a sequence of normalized eigenfunctions associated with an increasing sequence of eigenvalues and
\be\label{eqconcentr} \lim\limits_{ j\to\infty}R_{\omega}(v_j)=0\ee
then we say that $\set{v_j}_{j=1}^\infty$ \textbf{concentrates} in $\Omega\setminus\omega.$

On the other hand, if
\be\label{eqnonconcentr}
\liminf\limits_{ j\to\infty}R_{\omega}(v_j)>0,\quad \forall\omega\subseteq\Omega,
\ee
  then the sequence is \textbf{non-concentrating.}

\end{defn}
\begin{rem}
\label{rem-def-concentration:1}
    Later on we shall extend these notions also to sets of eigenfunctions that are not necessarily arranged as such sequences. Note that we study concentration and non-concentration for {\upshape{infinite subsets of eigenfunctions} } and not necessarily for the {\underline{whole set} } of eigenfunctions.
\end{rem}

In general, the occurrence of concentration phenomena for second-order  operators of the types ~\eqref{eqoperator} depends  on two features:
\begin{itemize}
\item The shape of the boundary $\pa\Omega.$
 \item The geometric properties of the  coefficient  $\tilde{c}(x).$
 \end{itemize}
 The literature concerning the \textit{concentration/non-concentration phenomena} as related to the shape of $\Omega$ is very extensive. A well-known aspect is the connection of ``quantum ergodicity'' to ``classically chaotic systems'' ~\cite{BuZu:1,BuZw:1,Can-Galko:1,H-H-Marz:1,Marz:1} and references therein.  The paper ~\cite{grebenkov-1} deals with spherical and elliptical domains.

 In contrast,  in this paper we are interested in the effects of the layered medium.
   Thus it is more closely related to the study of operators of the type $L=-\nabla\cdot(c\nabla)+V$ on a finite domain, where the potential $V(x)\geq 0$ is positive on a subset of positive measure. Typically, eigenfunctions associated with eigenvalues below $ess\,sup\,V(x)$ are concentrating. In ~\cite{AFM:1} the authors replace $V$ by an \textit{effective potential} $u(x)$ satisfying $Lu=1.$ They show concentration and exponential decay of eigenfunctions as derived from the geometry of $u.$ These phenomena are linked to the universal mechanism for the  Anderson and weak localization ~\cite{filoche}.
   
   Our operator $A$ ~\eqref{eqoperator} does not involve a potential but the concentration of suitable sequences of eigenfunctions results from the geometry of the  coefficient $\tilde{c}(x).$ As we shall see in Theorem ~\ref{theo-general-divergence-c-:1} below there is a strong underlying geometric aspect;  the concentration expresses the fact that the masses of eigenfunctions ``flow'' (as the eigenvalues increase) into the ``wells'' (or ``valleys'').

 Turning to the {\underline{non-concentration case}}, we observe that the existing literature is less extensive, perhaps due to the fact that it is not directly related to physical or industrial applications. Nevertheless we shall see that  it leads to some interesting mathematical questions concerning the structure and asymptotics of  eigenfunctions (typically associated with large eigenvalues). Recent publications in this direction are ~\cite{H-Marz:1} dealing with non-concentration in partially rectangular billiards and ~\cite{Christ-Toth:1} concerning piecewise smooth planar domains.  A  non-concentration result in a stricter sense is that ``almost all eigenfunctions of a rational polygon are uniformly distributed'' ~\cite{rudnick}. Estimates for nodal sets such as ~\cite{DoFe:1}  were extended in ~\cite{JL:1,LaLe:1} motivated by questions from control theory and ~\cite{LLP:1} that deals with non-concentration in the Sturm-Liouville theory. In the 1-D case issues of non-concentration are closely related to details of oscillatory solutions. We shall come back to it later in this introduction.

This paper deals with  both  concentration and non-concentration phenomena  for eigenfunctions of layered operators. As already pointed out  the  latter is less studied in the literature, especially when the  celerity $\sqrt{c(y)}$ is not regular (even discontinuous). As a result, the non-concentration case plays a greater role in this paper.  For such eigenfunctions we extend the scope of the study; not only facts pertaining to non-concentration but a more detailed study of the structure of the solutions in terms of the oscillatory character, amplitudes and their ratios and asymptotic behavior. In contrast to the concentrating case, we shall see that the essential features of the non-concentrating solutions depend primarily  on the maximum and minimum of $\tilde{c}(x) = \tilde{c}(x',y) = c(y)$ and, going deeper into the structures, on the \textit{total variation} of $c(y).$

The paper is organized as follows.

In Section ~\ref{secsetup} we introduce all relevant notations and details concerning the functional setting.  In our case, the eigenvalues are classified by a double-index enumeration, with a conic sector (in index space {$(\mu_k^2,\,\lambda)$, see Figure \ref{image3}}). The set of eigenvalues   associated with concentrating eigenfunctions ($\mathfrak{F}_{G}$) (see ~\eqref{eqeigenconcentrate}) is distinguished from those (see ~\eqref{eqAceps})  associated with non-concentrating  eigenfunctions ($\mathfrak{F}_{NG}$). This  separation serves as the analog to the maximal value of a perturbation potential that separates concentrating from non-concentrating eigenfunctions in the potential perturbation framework.

Section \ref{section-divergence-gw:1} deals with guided waves for $A=-\nabla\cdot\tilde{c}\,\nabla .$
 The main result for the concentrating case ($\mathfrak{F}_{G}$) (Theorem \ref{theo-general-divergence-c-:1}) is proved.  The proof of the existence of sequences of eigenvalues satisfying the hypotheses of this theorem (see condition ~\eqref{eqeigenconcentrate}) is very similar to that in ~\cite{BBD:5} and will not be repeated here.   We mention that our proof yields only polynomial decay of the eigenfunctions outside the concentration layers, in contrast to the exponential decay obtained in the non-divergence case ~\cite{BBD:5}.

  In Section ~\ref{section-regular-ng} we turn to the case of non-concentrating eigenfunctions  (\textit{non-guided waves} in  the physical literature) for $A = -\nabla\cdot\tilde{c}\,\nabla.$ The set of corresponding eigenvalues is $ \mathscr{A}^{c}_\eps$ (see Definition ~\ref{defn-Ac}) that are located in the aforementioned upper conic sector in the index grid.

  The first approach that comes to mind is to transform the problem to a \textit{canonical form}. In other words, to use coordinate transformations so that the diffusion coefficient becomes a ``manageable'' perturbation of a constant one. In fact, this is done in this section under the assumption that $c(y)\in C^{2}(\lbrack 0,H\rbrack).$ In this case the classical Liouville transformation can be invoked, leading to a detailed asymptotic (almost sinusoidal) behavior of the non-concentrating eigenfunctions.

   In Section ~\ref{section-sufficient condition-ng} we study phenomena of non-concentration when  the  celerity $\sqrt{c(y)}$ is less regular.  The treatment is more delicate since the classical asymptotic methods are not applicable.  Instead, we introduce a powerful tool that we label as the  \textbf{minimal amplitude hypothesis} (see Definition ~\ref{definition-hypo-minam:1}).  This hypothesis is a geometric assumption on the asymptotic behavior  of the amplitudes in the $(u,\,u') $ phase plane.  The non-concentration of sets of oscillatory solutions follows directly from this geometric assumption (Theorem  ~\ref{theo-C1-noguided}). 
   
   Thus, in the rest of Section ~\ref{section-sufficient condition-ng} we focus on establishing the minimal amplitude hypothesis that implies Theorem ~\ref{thmcyirreg-minamp}. Furthermore, the hypothesis is established simultaneously for a full family $\mathscr{K}$ of coefficients (see ~\eqref{eq-def-K}).
       It underlines the fact that only the extremal values of $c(y)$ come into play for Lipschitz continuous coefficients. A different approach is employed in the case of piecewise constant coefficients. In each case, additional properties of the solutions are obtained. The ultimate case where we were able to establish the minimal amplitude hypothesis is for $c(y)$ being of \textbf{bounded total variation;} The non-concentration is shown to hold \textit{simultaneously} for the full family of  coefficients of total variation $TV(c)$ below a fixed $V.$ More specifically we get

   \begin{thma}
   \textit{
    Fix $0<c_m<c_M,\,\,\eps>0,\,\,V>0.$ Let
      $$\mathscr{K}_V=\set{c(y),\,\,c_m\leq c(y)\leq c_M,\,\,0<y<H,\,\,\,TV(c)\leq V}.$$
            Consider (for every $c(y)\in\mathscr{K}_V$) the subset of eigenvalues $ \mathscr{A}^{c}_\eps$ (see \eqref{eqAceps}) and the associated eigenfunctions $\set{v_\lambda}$ (of the form ~\eqref{equation-introduction:1bis} below).
      \\
             For an interval $(a,b)\subseteq (0,H) $ let $\omega:=\omega'\times(a,b)\subseteq\Omega,$ where $\omega'\subseteq\Omega'$ is an open set.
             \\
             If $\omega'\neq\Omega'$ assume that the family $\set{\phi_k(x')}_{k=1}^\infty$ of eigenfunctions of the Laplacian in $\Omega'\subseteq\RR^d$ does not concentrate in $\Omega'\setminus\omega'.$
             \\
             Then there exists $\mathfrak{f}_\omega>0$ such that
      \be\label{eqnoncontrateab}
      \mathfrak{f}_\omega\leq\frac{\Vert v_{\lambda}\Vert_{L^{2}(\omega)}}{\Vert v_{\lambda}\Vert_{L^{2}(\Omega)}}\leq 1
      \ee
      uniformly for all $c(y)\in\mathscr{K}_V$ and all eigenvalues in $ \mathscr{A}^{c}_\eps.$
      }
      \end{thma}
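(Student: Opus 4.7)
The plan is to exploit the layered structure via separation of variables. Since $c(y)$ depends only on $y$ and $\Omega=\Omega'\times(0,H)$, every eigenfunction of $A$ decomposes as $v_\lambda(x',y)=\phi_k(x')\,u(y;\lambda,k)$, where $\phi_k$ are Dirichlet eigenfunctions of $-\Delta_{x'}$ on $\Omega'$ with eigenvalues $\mu_k^2$, and $u=u(\cdot;\lambda,k)$ solves the one-dimensional Sturm--Liouville problem
\[
-(c(y)u')'+\mu_k^2\, c(y)\,u=\lambda u\ \text{ on }(0,H),\qquad u(0)=u(H)=0.
\]
The condition $\lambda\in\mathscr{A}^{c}_\eps$ places $(\mu_k^2,\lambda)$ in the upper conic sector of the index grid, so that $\lambda-\mu_k^2 c(y)\geq\kappa>0$ uniformly in $y$ with $\kappa$ depending only on $\eps,c_m,c_M$; in particular, the ODE is oscillatory on the whole of $(0,H)$.

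With this decomposition and $\omega=\omega'\times(a,b)$, the ratio of interest factors cleanly as
\[
\frac{\Vert v_\lambda\Vert^2_{L^2(\omega)}}{\Vert v_\lambda\Vert^2_{L^2(\Omega)}}=\frac{\Vert\phi_k\Vert^2_{L^2(\omega')}}{\Vert\phi_k\Vert^2_{L^2(\Omega')}}\cdot\frac{\Vert u\Vert^2_{L^2(a,b)}}{\Vert u\Vert^2_{L^2(0,H)}}.
\]
The first factor is bounded below by a positive constant $\mathfrak{f}'_{\omega'}>0$ uniformly in $k$: this is exactly the assumption that $\{\phi_k\}$ does not concentrate in $\Omega'\setminus\omega'$ (and when $\omega'=\Omega'$ the factor is identically $1$, explaining why no hypothesis on $\{\phi_k\}$ is needed in that case). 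The bulk of the proof is therefore the one-dimensional statement: a uniform positive lower bound on the second factor, holding simultaneously for all $c\in\mathscr{K}_V$ and all admissible pairs $(\lambda,k)$ with $\lambda\in\mathscr{A}^{c}_\eps$.

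To obtain that lower bound I would invoke the minimal amplitude hypothesis of Definition \ref{definition-hypo-minam:1}, which controls the amplitude of $u$ from above and below in the $(u,u')$ phase plane. Theorem \ref{thmcyirreg-minamp} asserts that this hypothesis is valid uniformly on the whole family $\mathscr{K}_V$, and Theorem \ref{theo-C1-noguided} then converts it into a uniform non-concentration estimate for the ODE: combined with the oscillatory gap $\lambda-\mu_k^2 c\geq\kappa$, the two-sided amplitude bound gives $\Vert u\Vert^2_{L^2(a,b)}\geq c_1(b-a)$ and $\Vert u\Vert^2_{L^2(0,H)}\leq c_2 H$ with $c_1,c_2$ independent of $c\in\mathscr{K}_V$ and of $(\lambda,k)$. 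Multiplying the resulting ratio by $\mathfrak{f}'_{\omega'}$ produces the constant $\mathfrak{f}_\omega$; the upper bound $\leq 1$ is immediate.

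The hard part — already handled in the body of Section \ref{section-sufficient condition-ng} — is the uniform verification of the minimal amplitude hypothesis on $\mathscr{K}_V$. Classical Liouville-type asymptotics require $c\in C^2$ and are not available here, so I would track an energy/amplitude functional built from $u$, $u'$ and $c$ whose oscillation along $y$ is controlled by the total variation $TV(c)\leq V$ via a Gronwall-type inequality. This is precisely the role played by the bounded variation constraint in Theorem \ref{thmcyirreg-minamp}, and it is the only place where the uniformity over the infinite-dimensional family $\mathscr{K}_V$ truly enters.
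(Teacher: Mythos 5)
Your high-level reduction is exactly the paper's: factor $v_\lambda=\phi_k\,u$, observe that the ratio over $\omega=\omega'\times(a,b)$ splits as a product of an $x'$-ratio and a $y$-ratio, dispose of the $x'$-ratio by the non-concentration hypothesis on $\{\phi_k\}$, and reduce the whole theorem to a uniform lower bound on $\int_a^b u(y;\lambda,k)^2\,dy$ for normalized solutions, which in turn is reduced (via Theorem~\ref{thmcyirreg-minamp}) to verifying the minimal amplitude hypothesis uniformly on $\mathscr{K}_V$. All of that is correct and is essentially what the paper does.

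The gap is in the one step you flag as the ``hard part.'' You propose to establish the minimal amplitude hypothesis for $c\in\mathscr{K}_V$ by a Gronwall-type inequality on a phase-plane energy functional, controlled by $TV(c)$. That is the mechanism the paper uses only in the Lipschitz case (Proposition~\ref{prop-minam:1bis}): there one passes to $w_1=w$, $w_2=w'/\alpha$ with $\alpha=\mu_k\sqrt{c\,p}$ and gets $\tfrac{d}{dt}(w_1^2+w_2^2)=-2(\alpha'/\alpha)w_2^2$, with $|\alpha'/\alpha|$ bounded by $\sup|c'|/c$. This breaks down exactly when $c$ is merely BV: $\alpha'/\alpha$ is then a measure, and worse, the rescaled coordinate $w_2=w'/\alpha$ jumps whenever $c$ jumps, so the ``amplitude'' $w_1^2+w_2^2$ you would feed into Gronwall is itself discontinuous and the differential inequality does not survive. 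The paper therefore does something genuinely different for BV: it approximates $c$ by piecewise constant functions $c^{(n)}$ with $TV(c^{(n)})\le TV(c)$ (Claim~\ref{claim:4-new}), proves the amplitude bound for piecewise constant $c$ by writing the solution explicitly as a sine in each layer and matching $u$ and $cu'$ at interfaces, which gives a multiplicative recursion on the amplitudes $\beta_j$ with $\beta_{j+1}^2/\beta_j^2$ within a factor $1+\kappa|c_{j+2}-c_{j+1}|$, telescoping to $e^{\pm\kappa V}$ independently of the number of pieces (Proposition~\ref{propcyconst}); and then passes to the BV limit via a nontrivial convergence lemma (Lemma~\ref{lem-eigenfunction:1-new}) showing that eigenvalues and both $u^{(n)}$ and $c^{(n)}(u^{(n)})'$ converge uniformly, so that the amplitude lower bound survives the limit. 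You would need to supply an argument of this kind; the Gronwall sketch as stated does not close for non-absolutely-continuous BV coefficients.

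A smaller but real omission: Theorem~\ref{thmcyirreg-minamp} only produces the lower bound $\int_a^b u^2\ge d$ for $\lambda>\lambda_0$, with $\lambda_0$ independent of $c$ but not of $(a,b)$. To get the stated bound for \emph{all} $(\mu_k,\lambda)\in\mathscr{A}^c_\eps$ and \emph{all} $c\in\mathscr{K}_V$ simultaneously, one must also handle the finitely many low pairs $\lambda\le\lambda_0$; since those excluded eigenfunctions vary with $c$, a simple ``finitely many exceptions'' remark is not enough. The paper closes this by a compactness/contradiction argument using the uniform amplitude bound $\mathfrak{r}_{V,\eps}$ across the whole family; your proposal does not address this regime.

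(Minor: since the $u(y;\lambda,k)$ are normalized in $L^2(0,H)$, the denominator $\|u\|^2_{L^2(0,H)}$ is identically $1$; you do not need a separate upper bound for it.)
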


      This theorem will be proved as part of the more detailed Theorem ~\ref{prop-minam:1-new}.
        \begin{rem}\label{rem-uniform}
        The uniformity statement in $\mathscr{K}_V$ is relevant for physical applications, where the coefficient $c(y)$ is only approximately known.
        \end{rem}

    Remark that   the case of a continuous $c(y),$  but  not of bounded variation,  remains an open problem, whence the following question arises naturally:
\begin{framed}
\textbf{What degree of regularity of $c(y)$ could serve as necessary and sufficient in order to satisfy the minimal amplitude hypothesis (Definition ~\ref{definition-hypo-minam:1})?}
\end{framed}

\section{\textbf{SETUP AND MAIN RESULTS}}\label{secsetup}


We first represent the operator  $A= -\nabla\cdot\tilde{c}\,\nabla$  as a direct sum of ordinary differential operators.  For the Laplacian $-\Delta_{x'}$ acting in $L^{2}(\Omega')$ with domain $H^2(\Omega')\cap H^1_0(\Omega')$, we denote by $\set{(\mu_k^2,\phi_k)}_{k\geq1}$ the sequence of pairs (nondecreasing sequence of eigenvalues counting multiplicity, normalized eigenfunctions).  As the coefficient  function $\tilde{c}(x',y) = c(y)$ depends only on the last coordinate $y,$  a separation of coordinates  is natural. Using spectral decomposition in the $x'-$coordinate the operator $A=-\nabla\cdot\tilde{c}\,\nabla$ is unitarily equivalent to a direct sum of reduced operators in the form

\begin{equation}
\label{equation-spectralstructure:1}\
-\nabla\cdot\tilde{c}\,\nabla\approx \bigoplus_{k\in\mathbb{N}^*} A_{k}\quad \mbox{ acting in }\bigoplus_{k\in\mathbb{N}^*} L^{2}((0,H),dy),
\end{equation}

where

\be\label{eqAkofy}A_{k} :=c(y) \mu_{k}^{2} - \frac{d}{dy}c(y)\frac{d}{dy}, k = 1,2,\ldots,\mbox{ with }\,\,D(A_k)=\{u\in H^1_0(0,H), cu'\in H^1(0,H)\}.
\ee

 The eigenvalues of $A$ are ordered by a two-index system, namely $\sigma(A)=\{\beta_{k,\ell}, k,\ell\geq 1\}$ where $\Lambda_{k} = \{\beta_{k,1},\beta_{k,2}, \ldots\}$ is the increasing sequence  of the eigenvalues of $A_{k}.$
In others words, for each eigenvalue  $\lambda$  of $A$ there exists at least one $k\in \mathbb N^*$ such that $\lambda$ is a simple eigenvalue of $A_k,$ whence there exists at least a pair $(k,\ell)\in \mathbb N^*\times \mathbb N^*$ such that $\lambda =\beta_{k,\ell}$ (there is a one-to-one  relationship between the pairs $(k,\ell)$ and $(\lambda, k)$).
 If $\lambda$ is an eigenvalue, there is a finite number of pairs $(k,\ell)$ such that $\lambda=\beta_{k,\ell}.$

We construct an orthonormal basis  of eigenfunctions $\mathcal{B}=\set{v_{k,\ell}}_{k\geq 1,\ell\geq 1}$  associated with the eigenvalues $\beta_{k,\ell}.$  They are given by  $v_{k,\ell}(x',y)= \phi_{k}(x')u_{k,\ell}(y) $ where $u_{k,\ell}(y)$ satisfies

\begin{equation}
\label{equation-introduction:1}
 -(c(y)u_{k,\ell}')' + (c(y)\mu^{2}_{k}-\beta_{k,\ell})u_{k,\ell}= 0, \quad u_{k,\ell}(0) = u_{k,\ell}(H) = 0.
\end{equation}

\begin{rem}\label{reminfinitekl}
As is typical in ``separation of variables'' situations, the study of the spectral properties of the partial differential operator $A$ is carried out by controlling the behavior of the infinite set of ordinary differential operators of the type ~\eqref{equation-introduction:1}.
\end{rem}

\textbf{Henceforth we use the notation $u_{\lambda,k}$ instead of $u_{k,\ell}.$}
\\
 We often write $v_{\lambda}$ instead of $v_{k,\ell}.$
\begin{equation}
\label{equation-introduction:1bis}
\lambda = \beta_{k,\ell}\Longrightarrow v_{\lambda}(x',y)= v_{k,\ell}(x',y) = \phi_k(x')u_{\lambda,k}(y),\quad u_{\lambda,k}(y) \,\,\mbox{normalized in}\,\,L^{2}((0,H), dy).
\end{equation}
In this paper we are primarily interested in the phenomena of concentration or non-concentration of the mass of eigenfunctions.

\begin{defn}
\label{defn-layer:1}
For $a<b,$ a layer of $\Omega$ is defined as $\Omega_{a,b}:=\Omega'\times( a,b)\subseteq\Omega.$
\end{defn}

 $\bullet$ {\textbf{ ON THE CONCENTRATION}}\\
 \vskip.5cm

\begin{minipage}[t]{154mm}
 \begin{wrapfigure}{r}{7cm}
\vskip-4.2cm
\centering
\includegraphics[scale=.33]{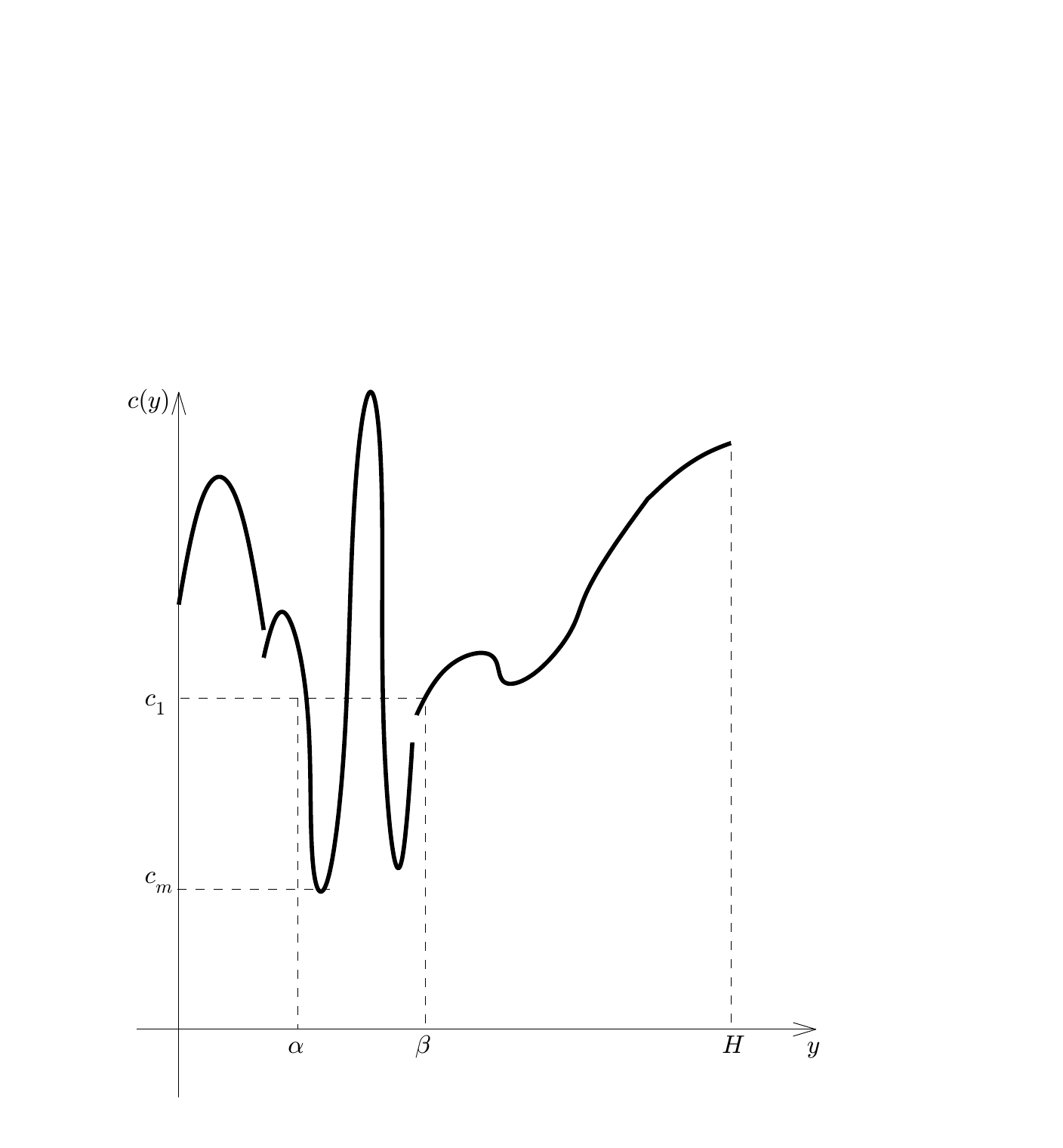}\hfill\\
\vskip-.5cm
\caption{\label{fig:image1}}
\end{wrapfigure}
\noindent
 \begin{defn}$\empty$\\
\label{def-well:1}
Let $\omega=\Omega_{\alpha,\beta}$ be a layer of $\Omega.$ We say that $\omega$ is a {\bf well for the profile} $c(y)$ if there exists $c_{1}>0$ such that
\begin{equation}
\label{cge:2}
\!\!\!\!\!\!\!\!\!\!\!\!\!\!\!\!\!\!\!\!\!\!\!\!\!\!\!\!\!\!\!\!\!\!\!\!\!\!\!\!\!\!\!\!\!\!\!\!\!\!\!\!\!\!\left\{\begin{array}{ll}
0<c_m
< c_1,\\
 c(y)\geq c_1>0,\quad a.e.\;y\in (0,H)\setminus (\alpha,\beta),
\end{array}
 \right.\end{equation}
 (See Figures ~\ref{fig:image1} and \ref{image2}).
\end{defn}
\end{minipage}

\vskip2cm

 Note that our definition allows at most one well in the domain $\Omega,$ but a well may contain several minima (see Figure ~\ref{fig:image1}).

  In the concentration case we shall prove Theorem  ~\ref{theo-general-divergence-c-:1}, which yields  decay outside a well.  Unlike the case  of the operator  $-\tilde{c}\Delta$  ~\cite[Theorem 2.4]{BBD:5} we do not obtain here an exponential rate of decay outside the well.  Observe that the only hypothesis imposed on $c(y)$ is ~\eqref{eqassumptioncy}.


Our result here can be compared to the results of ~\cite{AFM:1},  that involves a positive potential perturbation. It leads to a construction of a ``landscape'' function that serves (rather, its inverse) as an ``effective potential''. In terms of this potential the exponential decay is     expressed by an  ``Agmon-type'' \cite{AG:1} metric. In our proof the 1-D dependence of $c(y)$ plays an important role.  A trivial shift leads to an operator of the form 
$A:=-\nabla\cdot\tilde{c}\,\nabla+V,$ where $V\equiv 1.$\\
Thus, the landscape function $\theta(x)$ is the function satisfying
\be\label{eqlandscape}
-\nabla\cdot\tilde{c}\,\nabla\theta+\theta=1,
\ee
subject to the Neumann boundary condition ~\cite[Section 3]{AFM:1}.
The projection of the landscape function $\theta$ on each fiber $L^2(0,H)$ yields a reduced "effective potential". Note that  the continuity of the coefficients is assumed, whereas  in our treatment the important case of a piecewise-constant $\tilde{c}$ is included.

\noindent

$\bullet$ {\textbf{ ON THE NON-CONCENTRATION.}}
 The second type of results concerns the  sets (indexed by $\varepsilon>0$) of non-guided normalized eigenfunctions (the set $\mathfrak{F}_{NG}$ of the Introduction). They are associated with eigenvalues
  $$\lambda= \beta_{k,\ell}> (c_{M}+\varepsilon)\mu_{k}^{2}, c_{M}:={\rm ess\,sup}_{y} c(y).$$
  This set is characterized
  by the fact that there is a positive lower bound for the masses in any layer $\Omega_{a,b},$  uniformly for all its elements.

     Recall  that $\lambda$ can correspond to several pairs $(k,\ell)$ and only some of them satisfy the above inequality.

  The geometrical interpretation of non-concentration is clear in the one-dimensional case $\Omega' = (0,L)$ and $\lambda= \beta_{k,\ell}> c_{M}\mu_{k}^{2}$: at each interface the angle between the wave and the normal is less than the critical angle stipulated by  geometric optics. So, the eigenfunction can travel across each layer without big loss.

  In physical applications it is conceivable that the diffusion coefficient $c(y)$ is known only approximately. It is therefore interesting to extend our study to deal with sets of such coefficients.  Let $0<c_m<c_M$ be fixed. We assume that every
 coefficient $c(y)$ satisfies condition $\mbox{\bf(H)}$ (see ~\eqref{eqassumptioncy}) and recall that we denote by
  $\mathscr{K}$
  the family of all such coefficients (see ~\eqref{eq-def-K}).

In various cases, we shall impose further assumptions on the elements of $\mathscr{K}.$

  We introduce a subset of eigenvalues,  whose associated eigenfunctions will be shown to be (perhaps under additional assumptions) non-concentrating.
\begin{defn}\label{defn-Ac} Fix $\eps>0.$ For any fixed $\mu_k,$ let $\ell_{0,k}$ be the first $\ell$ satisfying $\beta_{k,\ell_{0,k}}\geq (c_M+\eps)\mu_k^2.$\\
We designate
 (see Figure \ref{image3})
\be\label{eqAceps} \mathscr{A}^{c}_\eps={\underset{k=1}{\overset{\infty}\bigcup}}\set{(\mu_k,\lambda),\,\,\,\lambda= \beta_{k,\ell},\quad\ell\geq\ell_{0,k}}.
\ee
\end{defn}
Next we define the \textbf{minimal amplitude} of the family of the associated solutions (of ~\eqref{equation-introduction:1}) as follows.
\begin{equation}
 \label{equation-definition-hypo-minam:1}
 \mathfrak{r}_{c,\eps}^{2} = \inf_{\begin{array}{c} y\in \lbrack 0, H\rbrack,\\ (\mu_k,\lambda)\in\mathscr{A}^{c}_\eps\end{array}}\lbrack u_{\lambda,k}(y)^{2} + (c(y)u'_{\lambda,k}(y))^{2}\rbrack.
 \end{equation}
 \begin{defn}
 \label{definition-hypo-minam:1}
 Let $c(y)\in \mathscr{K}.$ We say that $c(y)$ satisfies the {\textbf{\textit minimal amplitude hypothesis}} with respect to $\mathscr{A}^{c}_\eps$  if
 \be\label{eqrcgreat0}\mathfrak{r}_{c,\eps}>0.\ee
 \end{defn}
\begin{rem}\label{remprufer}
 Note that this hypothesis has a very clear geometric interpretation by means of the Pr\"{u}fer substitution ~\cite{birkhoff}.
\end{rem}
 \noindent
Observe that while the minimal amplitude deals with the sum of squares $u_{\lambda,k}(y)^{2} + (c(y)u'_{\lambda,k}(y))^{2},$ the non concentration involves only the integral of $u_{\lambda,k}(y)^{2}$ over various intervals. The following theorem connects these topics, showing that the minimal amplitude hypothesis implies non-concentration. Here we state it using the physical model with the spectral parameter $\lambda.$ It is  proved in a somewhat more detailed form (using the reduced eigenfunctions $u_{\lambda,k}$) as Theorem ~\ref{thmcyirreg-minamp}.
\begin{thm}[Non-concentration in any layer]
\label{theo-C1-noguided}
Let $c(y)\in \mathscr{K}$ satisfy the minimal amplitude hypothesis. For any $(\mu_{k},\lambda)\in \mathscr{A}^{c}_\eps$  let $v_{\lambda}(x)$  be an associated eigenfunction.
  \\
 Let $\omega:=\omega'\times(a,b)\subseteq\Omega$ be an open set.
  If $\omega'\neq\Omega'$ assume that the family $\set{\phi_k(x')}_{k=1}^\infty$ of eigenfunctions of the Laplacian in $\Omega'\subseteq\RR^d$ does not concentrate in $\Omega'\setminus\omega'$ (see Definition \ref{def-concentration:1} and Remark \ref{rem-def-concentration:1}).
  \\
  Then  there exists  a constant $C_{\omega}>0$  such that,
\begin{equation}
\label{ineq-ng-general:1}
0< C_{\omega}\leq \inf\limits_ {(\mu_{k},\lambda)\in \mathscr{A}^{c}_\eps}\,\frac{\Vert v_{\lambda}\Vert_{L^{2}(\omega)}}{\Vert v_{\lambda}\Vert_{L^{2}(\Omega)}}\leq 1.
\end{equation}
\end{thm}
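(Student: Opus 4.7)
The plan is to reduce the estimate to a one-dimensional statement for the longitudinal factor $u_{\lambda,k}$ by separating variables, and then to extract the required lower bound through a Pr\"ufer phase--plane argument in which the minimal amplitude hypothesis takes a transparent form.

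First, since $\set{\phi_k}_{k\geq 1}$ is orthonormal in $L^2(\Omega')$ and each $u_{\lambda,k}$ is $L^2((0,H),dy)$-normalised, the tensor form $v_\lambda(x',y)=\phi_k(x')u_{\lambda,k}(y)$ yields $\|v_\lambda\|_{L^2(\Omega)}=1$ and, writing $\omega=\omega'\times(a,b)$,
\[
\|v_\lambda\|_{L^2(\omega)}^{2}=\|\phi_k\|_{L^2(\omega')}^{2}\cdot\|u_{\lambda,k}\|_{L^2((a,b))}^{2}.
\]
It therefore suffices to bound each factor on the right from below uniformly for $(\mu_k,\lambda)\in\mathscr{A}^{c}_{\eps}$. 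For the transverse factor, the non-concentration hypothesis on $\set{\phi_k}$ (Definition \ref{def-concentration:1}) gives $\liminf_{k\to\infty}\|\phi_k\|_{L^2(\omega')}^{2}>0$, and each individual $\phi_k$, being a Dirichlet Laplace eigenfunction and hence real-analytic in the interior of $\Omega'$, cannot vanish identically on the nonempty open set $\omega'$. Together these yield $\inf_k\|\phi_k\|_{L^2(\omega')}^{2}>0$. (When $\omega'=\Omega'$ this factor equals $1$.)

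For the longitudinal factor, introduce the Pr\"ufer substitution $u_{\lambda,k}(y)=R(y)\sin\phi(y)$, $c(y)u_{\lambda,k}'(y)=R(y)\cos\phi(y)$, chosen so that $R^{2}=u_{\lambda,k}^{2}+(cu_{\lambda,k}')^{2}$ is exactly the quantity in the minimal amplitude hypothesis, giving $R(y)^{2}\geq\mathfrak{r}_{c,\eps}^{2}>0$ for every $y$ and every $(\mu_k,\lambda)\in\mathscr{A}^{c}_{\eps}$. A short computation from \eqref{equation-introduction:1} produces the phase equation
\[
\phi'(y)=\frac{\cos^{2}\phi(y)}{c(y)}+\bigl(\lambda-c(y)\mu_k^{2}\bigr)\sin^{2}\phi(y).
\]
The definition of $\mathscr{A}^{c}_{\eps}$ forces $\lambda-c(y)\mu_k^{2}\geq\eps\mu_k^{2}\geq 0$, so $\phi'\geq 0$; moreover whenever $|\sin\phi|\leq 1/\sqrt{2}$ one has $\cos^{2}\phi\geq 1/2$ and hence $\phi'\geq 1/(2c_{M})$. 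This prevents $\phi$ from lingering near the zeros of $\sin\phi$, so that $[a,b]$ necessarily intersects ``good zones'' where $\sin^{2}\phi\geq 1/2$ and therefore $u_{\lambda,k}^{2}=R^{2}\sin^{2}\phi\geq\mathfrak{r}_{c,\eps}^{2}/2$. A change of variables $y\mapsto\phi$ and a summation of contributions from good zones in $[a,b]$, combined with the complementary contributions from the ``bad zones'' where $R^{2}$ is necessarily much larger than $\mathfrak{r}^{2}$ (because $R^{2}$ is there dominated by $(cu_{\lambda,k}')^{2}$), yield the uniform bound $\|u_{\lambda,k}\|_{L^2((a,b))}^{2}\geq C'(a,b)>0$.

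The main obstacle is making this zone-by-zone accounting quantitative and uniform in $(\mu_k,\lambda)$. A naive use of $R^{2}\geq\mathfrak{r}_{c,\eps}^{2}$ yields only $\int_a^b u_{\lambda,k}^{2}\,dy\geq\mathfrak{r}_{c,\eps}^{2}\int_a^b\sin^{2}\phi\,dy$, which degenerates as $\lambda\to\infty$: the $y$-measure of the good zones scales like $1/\lambda$, since $\phi'\sim\lambda-c\mu_k^{2}$ there. A uniform bound therefore requires using in addition that $R^{2}$ is of the much larger order $\sim c\lambda\cdot u_{\max}^{2}$ on the bad zones, and balancing the two regimes so that their contributions combine to a bound of order $(b-a)\mathfrak{r}^{2}$. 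Carrying this balance out cleanly, \emph{simultaneously for every} $c\in\mathscr{K}$ admitting a positive $\mathfrak{r}_{c,\eps}$, is the technical heart of Theorem \ref{thmcyirreg-minamp}.
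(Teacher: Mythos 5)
Your reduction to the one--dimensional factor and your treatment of the transverse factor $\|\phi_k\|_{L^2(\omega')}$ are fine, and your Pr\"ufer setup correctly translates the minimal amplitude hypothesis into $R(y)^2\geq\mathfrak{r}_{c,\eps}^2$. But the proof is not closed: after correctly diagnosing that the naive bound $\int_a^b u^2\,dy\geq\mathfrak{r}_{c,\eps}^2\int_a^b\sin^2\phi\,dy$ degenerates as $\lambda\to\infty$, you defer the repair (``balancing the two regimes'') to ``the technical heart of Theorem \ref{thmcyirreg-minamp}'' --- which is precisely the statement you are being asked to prove (the paper proves Theorem \ref{theo-C1-noguided} \emph{as} Theorem \ref{thmcyirreg-minamp}). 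Moreover, the repair you sketch does not work as stated: on the bad zones one has $u^2=R^2\sin^2\phi$, and the fact that $R^2$ is there of order $\lambda\,u_{\max}^2$ is exactly offset by $\sin^2\phi$ being of order $\lambda^{-1}$, so ``$R^2$ much larger than $\mathfrak{r}^2$'' yields no extra lower bound on $\int u^2$ by itself. What is needed is a lower bound for $u^2$ on a subset of each nodal interval whose measure is \emph{comparable to the length of that interval}, not merely on the good zone of measure $O(1/\lambda)$.

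The paper closes this gap without any zone decomposition. On each nodal interval $(z_i,z_{i+1})$ one passes to the variable $t$ with $dy/dt=c(y)$; then $w(t)=u(y(t))$ is convex (say when $u<0$) because $(cu')'=-(\lambda-c\mu_k^2)u>0$, and at the unique interior critical point $z_{i+\frac12}$ one has $cu'=0$, so the minimal amplitude hypothesis gives directly $\max u^2=u(z_{i+\frac12};\lambda,k)^2\geq\mathfrak{r}_{c,\eps}^2$ (cf. Proposition \ref{prop-minam:0}). Convexity then yields $\int w^2\,dt\geq\frac13\mathfrak{r}_{c,\eps}^2(t_{i+1}-t_i)$, hence $\int_{z_i}^{z_{i+1}}u^2\,dy\geq\frac{c_m}{3c_M}\mathfrak{r}_{c,\eps}^2(z_{i+1}-z_i)$, a bound proportional to the full nodal length. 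Summing over the nodal intervals contained in $(a,b)$ --- which, by Claim \ref{claim:1}, cover a fixed fraction of $(a,b)$ once $\lambda>\lambda_0$ --- gives the uniform bound; the finitely many remaining pairs with $\lambda\leq\lambda_0$ are handled by noting that a nontrivial solution of \eqref{equation-introduction:1} cannot vanish identically on an interval. If you wish to keep the Pr\"ufer language, the missing ingredient is exactly this convexity (equivalently, control of the variation of $R$ across a single nodal interval), not a good-zone/bad-zone balance.
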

\begin{rem}\label{remuniformcy}
We shall see that in various cases we can find subsets $\mathscr{K}_1\subseteq\mathscr{K}$ such that the inequality ~\eqref{ineq-ng-general:1} holds uniformly with respect to $c\in \mathscr{K}_1.$
\end{rem}
\setcounter{figure}{1}
\hspace{-.5cm}\begin{minipage}[t]{149mm}
\begin{wrapfigure}{r}{6.1cm}
\vskip-0.3cm
\includegraphics[scale=.3]{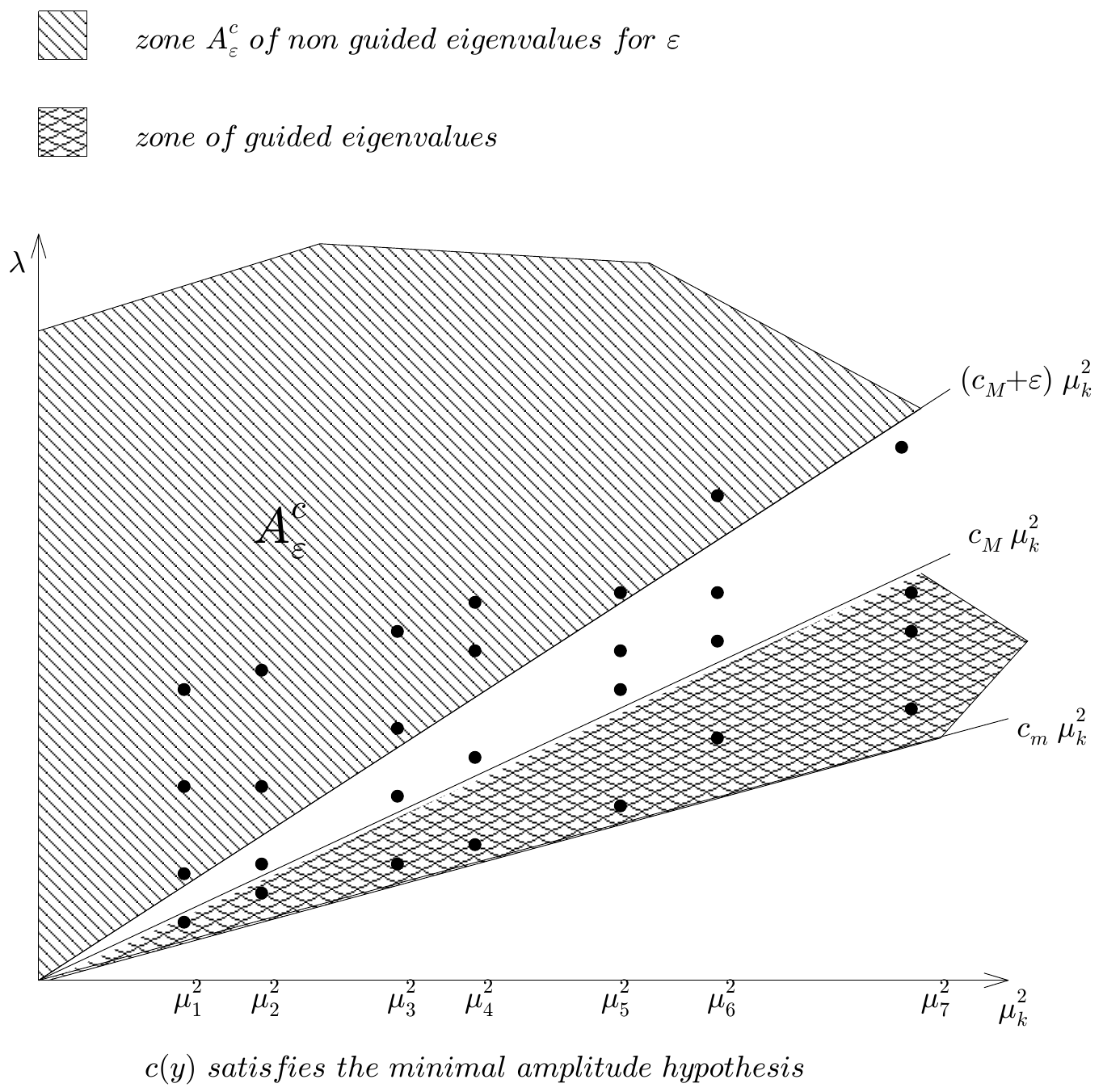}\vskip-.4cm\caption{\label{image3}}\hfill
\end{wrapfigure}
\noindent
\begin{rem}\label{rem-about-min-amplitude}$\empty$
 \begin{itemize}
 
 \item
  In Section ~\ref{section-sufficient condition-ng} we show that any eigenfunction associated with eigenvalues in $\mathscr{A}^{c}_\eps$ behaves in an oscillatory fashion. This is a straightforward consequence of the comparison principle. However, it does not exclude the possibility that some of the sections of the oscillatory solution may ``flatten out'', namely their amplitudes shrink as $\lambda\to\infty.$
  The condition ~\eqref{equation-definition-hypo-minam:1} ensures that such phenomena do not happen, as is stated in Theorem ~\ref{theo-C1-noguided}.
\item
 In Subsection ~\ref{subsec-more-minim} we discuss the meaning of the minimal amplitude hypothesis. If the function $c(y)$ is of bounded total variation we prove (Theorem ~\ref{prop-minam:1-new}) that it satisfies the hypothesis with respect to $ \mathscr{A}^{c}_\eps.$  This covers the cases of
 functions in $ C^{1}(\lbrack 0,H\rbrack)$ as well as  functions in $W^{1,1}(\lbrack 0,H\rbrack)$,  piecewise constant functions ...
 \end{itemize}
 \end{rem}
 \end{minipage}
\vskip.2cm
\noindent

\section{\textbf{GUIDED WAVES: THE GENERAL CASE}}
\label{section-divergence-gw:1}

\begin{thm}[Concentration in the layer $\Omega_{\alpha,\beta}$]
\label{theo-general-divergence-c-:1}
 Let $\Omega_{\alpha,\beta}$   be a well for $c$  (as in Definition ~\ref{def-well:1}) and  let $ \lambda$   be an  eigenvalue of  $A_{k}$ (hence of $A$) such that
     
   \be\label{eqeigenconcentrate}c_m\,\mu_{k}^{2}\leq \lambda\leq (c_{1}-\varepsilon)\mu_{k}^{2}.
\ee
   
  Let $v_{\lambda}$ be the associated normalized eigenfunction.  If  $\overline{\Omega_{a,b}}\subseteq\bar{\Omega}\setminus\overline{\Omega_{\alpha,\beta}}$,
 then 

\begin{equation}
\label{equation-decreasing:1}
\underset{\lambda\to\infty}{\lim}\int_{\Omega_{a,b}}\vert v_{\lambda}(x)\vert^{2}\, dx = 0.
\end{equation}
\end{thm}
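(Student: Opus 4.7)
The plan is to reduce the statement to a one-dimensional estimate on the reduced eigenfunction $u_{\lambda,k}$, and then extract a quantitative energy inequality on the complement of the well. First, using the product structure $v_\lambda(x',y)=\phi_k(x')u_{\lambda,k}(y)$ together with the orthonormality of $\set{\phi_k}$ in $L^{2}(\Omega')$, Fubini gives
$$\int_{\Omega_{a,b}}|v_\lambda(x)|^{2}\,dx=\int_a^b |u_{\lambda,k}(y)|^{2}\,dy,$$
so it suffices to prove $\int_a^b u_{\lambda,k}^{2}\,dy\to 0$ as $\lambda\to\infty$. Note also that \eqref{eqeigenconcentrate} yields $\mu_k^{2}\geq \lambda/(c_1-\eps)$, so in particular $\mu_k\to\infty$ whenever $\lambda\to\infty$ within the admissible index set; this is what lets the energy argument produce decay.

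Next I would run a weighted energy estimate adapted to the divergence form of $A_k$. Assume without loss of generality $(a,b)\subseteq(\beta,H)$; the case $(a,b)\subseteq(0,\alpha)$ is treated identically by swapping the roles of $0$ and $H$. Fix $\beta<\beta_0<a$ and pick a Lipschitz cut-off $\chi\colon[0,H]\to[0,1]$ with $\chi\equiv 0$ on $[0,\beta_0]$ and $\chi\equiv 1$ on $[a,H]$. Multiplying the reduced equation \eqref{equation-introduction:1} by $\chi^{2}u_{\lambda,k}$ and integrating over $(0,H)$ — the boundary contributions vanish thanks to $\chi(0)=0$ and $u_{\lambda,k}(H)=0$ — gives
$$\int_0^H c\chi^{2}(u'_{\lambda,k})^{2}\,dy+\int_0^H (c\mu_k^{2}-\lambda)\chi^{2}u_{\lambda,k}^{2}\,dy=-2\int_0^H c\chi\chi'u_{\lambda,k}u'_{\lambda,k}\,dy.$$
The elementary inequality $2|c\chi\chi'u_{\lambda,k}u'_{\lambda,k}|\leq c\chi^{2}(u'_{\lambda,k})^{2}+c(\chi')^{2}u_{\lambda,k}^{2}$ absorbs the gradient term and leaves
$$\int_0^H (c(y)\mu_k^{2}-\lambda)\chi^{2}u_{\lambda,k}^{2}\,dy\leq \int_0^H c(y)(\chi'(y))^{2}u_{\lambda,k}^{2}\,dy.$$
On $\mathrm{supp}(\chi)\subseteq[\beta_0,H]\subseteq(0,H)\setminus(\alpha,\beta)$ the well condition \eqref{cge:2} yields $c(y)\geq c_1$, hence $c(y)\mu_k^{2}-\lambda\geq \eps\mu_k^{2}$; combined with $c\leq c_M$ and the normalization $\|u_{\lambda,k}\|_{L^{2}}=1$, this gives the explicit bound
$$\int_a^b u_{\lambda,k}^{2}\,dy\;\leq\;\frac{c_M\|\chi'\|_\infty^{2}}{\eps\mu_k^{2}}\;\leq\;\frac{c_M(c_1-\eps)\|\chi'\|_\infty^{2}}{\eps\,\lambda},$$
so letting $\lambda\to\infty$ proves \eqref{equation-decreasing:1}.

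The only subtle point — rather than a genuine obstacle — is the one already flagged by the authors: this argument yields merely an $O(1/\lambda)$ polynomial decay rather than the exponential Agmon-type decay available for $-\tilde{c}\Delta$ in \cite{BBD:5}. The reason is intrinsic to the divergence form: inserting an exponential weight $e^{2\varphi}$ into the test function generates a term involving $c'$ that cannot be absorbed under the sole hypothesis \eqref{eqassumptioncy}, in which $c$ is merely $L^\infty$ (and possibly discontinuous, e.g.\ piecewise constant). Since the theorem only asserts the limit \eqref{equation-decreasing:1}, the simple quadratic-weight estimate above is enough; a sharper exponential rate would require additional regularity on $c(y)$ and a more careful choice of weight.
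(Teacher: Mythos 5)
Your proof is correct, and it takes a genuinely different and arguably more elegant route than the paper's. The paper proves the concentration by combining three ingredients: an interpolation/trace estimate $|u(\beta)|\leq\gamma\|u\|_{L^2}^{1-s}\|u'\|_{L^2}^{s}$ (from Lions--Magenes), a coercivity bound $\|u'\|^2_{L^2}\lesssim(1+\xi_M^2)\|u\|^2_{L^2}$, and a pointwise monotonicity argument on $cu'$ on the interval $(\beta,H)$ that converts the boundary value $|u(\beta)|^2$ into an integral lower bound $\int_\beta^H(\sigma-\beta)u^2\,d\sigma$; assembling these yields $\int_a^b u^2\lesssim(1+\xi_M^2)^s/\xi_1^2$ for any $s\in(\tfrac12,1)$. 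Your argument instead is the standard Caccioppoli (localization/energy) estimate: multiply by $\chi^2 u$, integrate by parts, absorb the cross term, and use $c\geq c_1$ on the support of $\chi$ together with $\lambda\leq(c_1-\eps)\mu_k^2$ to get a sign on the ``effective potential'' $c\mu_k^2-\lambda$. This avoids the trace theorem entirely, needs no sign/monotonicity discussion of $cu'$, extends verbatim to the divergence form with no modification, and even produces the \emph{cleaner} rate $O(1/\mu_k^2)=O(1/\lambda)$ versus the paper's $O(\mu_k^{2(s-1)})$ with $s<1$. Your closing remark correctly diagnoses why neither argument upgrades to exponential decay: inserting an exponential weight generates terms that cannot be controlled when $c$ is only $L^\infty$. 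One small redundancy: since $u\in H^1_0(0,H)$ one already has $u(0)=u(H)=0$, so the boundary terms vanish regardless of the normalization of $\chi$ at the endpoints; this does not affect the validity of the argument.
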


\setcounter{figure}{2}
\begin{figure}[h]
\centering
\includegraphics[scale=.25]{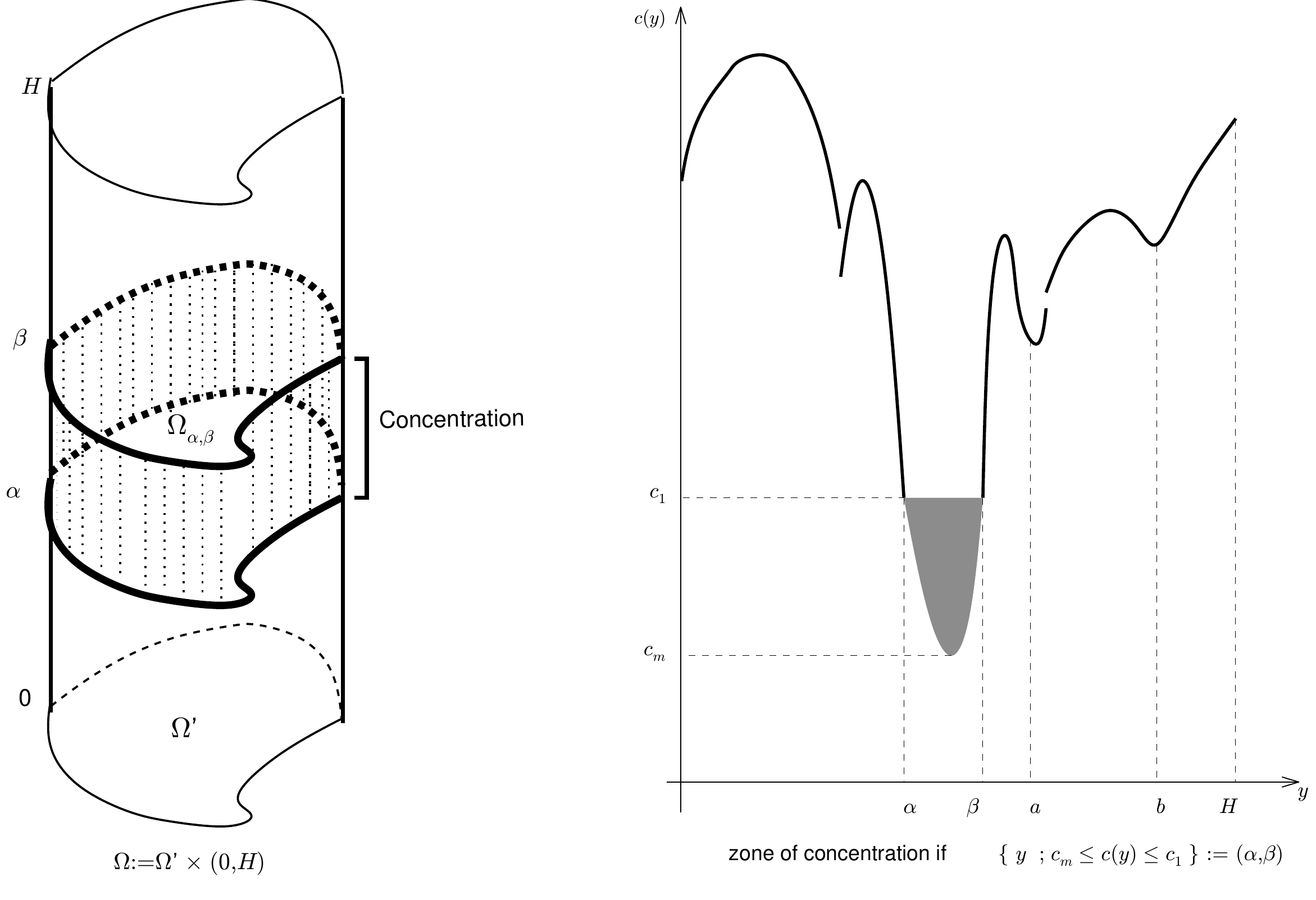}
\caption{}
\label{image2}
\end{figure}
\begin{proof}  Since there is no explicit formula for the Green function we  replace the method of proof of ~\cite[Theorem 2.4]{BBD:5} 
by
 an approach based on general trace estimates that   is applicable in the present case but also for $A=-\tilde{c}\,\Delta.$ So  let
$v_{\lambda}(x',y)= \phi_{k}(x')u_{\lambda,k}(y) $  where,  in analogy to ~\eqref{equation-introduction:1}, $u(y)=u_{\lambda,k}(y)$ satisfies

 \begin{equation}
\label{equation-premiereapproche:1}
\left\lbrace\begin{array}{l}
(cu')'= c(\mu_k^{2} -\frac{\lambda}{c})u \mbox{ on }(0,H),\\
u(0) = u(H) = 0.
\end{array}\right.
\end{equation}
 Since $\|u'\|_{L^{2}(0,H)}$ is an equivalent norm on $H^{1}_0(0,H)$  a standard trace estimate ~\cite[Chapter 1, Section 2, Proposition 2.3]{LionsMagenes} yields
\be\label{eqtraceuy}
 |u(y)|\leq\gamma \|u\|_{L^{2}(0,H)}^{1-s}\|u'\|_{L^{2}(0,H)}^s,\quad y\in (0,H),\,\,\frac12<s\leq 1.
\ee
Here and below $\gamma$ is a generic constant that may depend on various parameters
($s, H, c_m...$) but not on $u,\,y,\,\mu_k,\,\lambda.$

We define the self-adjoint operator (with same domain as $A_k$)
$$\mathscr{A}_k=A_{k}-c(y)\mu_{k}^{2} = -\frac{d}{dy}\big(c(y)\frac{d}{dy}\big).$$
The coercivity property $(\mathscr{A}_ku,u)_{L^{2}(0,H)}\geq c_m\|u'\|_{L^{2}(0,H)}^2$
entails, by the Cauchy-Schwarz inequality
$$\|u'\|_{L^{2}(0,H)}^2\leq \frac{1}{2c_m}\Big[\|\mathscr{A}_ku\|^2_{L^{2}(0,H)}+\|u\|^2_{L^{2}(0,H)}\Big].$$
 Invoking ~\eqref{equation-premiereapproche:1}, we obtain
  \be\label{equyu'L2}
  \|u'\|_{L^{2}(0,H)}^2\leq \frac{1}{2c_m}\Big[(1 + \xi_{M}^{2})\| u\|^{2}_{L^{2}(0,H)}\Big],\quad \xi_{M}^{2}= c_M\mu_{k}^{2} -\lambda.
  \ee
  Finally, combining this estimate with ~\eqref{eqtraceuy} we obtain
  \be\label{equpestuy}
  |u(y)|\leq\gamma(1 + \xi_{M}^{2})^{\frac{s}{2}}\|u\|_{L^{2}(0,H)},\quad \frac12<s\leq 1,\,\,y\in (0,H).
  \ee
  In particular, for $y=\beta,$\\



\begin{equation}
\label{equation-div-upperbound:1}
\vert u(\beta)\vert^{2} \leq \gamma\, (1 + \xi_{M}^{2})^{s}\Vert u\Vert^{2}_{L^{2}(0,H)}.
\end{equation}
\noindent
We next  obtain  a lower pointwise bound. Suppose that $\lbrack a,b\rbrack\subseteq (\beta,H).$
As $u(H) = 0$ we assume $(cu')(H)<0$ (otherwise replace $u$ by $-u$) and $u$ is monotone decreasing in $\lbrack \beta, H\rbrack$\footnote{

In view of ~\eqref{equation-premiereapproche:1} $(cu')(y)$ is strictly increasing in $(\beta,H)$ hence $cu'(y)<0$ there. }, whence
\begin{equation*}
\label{equation-lowerbound:2}
(cu')(y) = (cu')(H) - \int_{y}^{H}c(\sigma)\big(\mu^{2}_{k} -\frac{\lambda}{c(\sigma)}\big)u(\sigma)\,d\sigma\leq - \int_{y}^{H}\xi_{1}^{2}{c(\sigma)}u(\sigma)\,d\sigma, \quad \xi_{1}^{2}:= \mu_{k}^{2}-\frac{\lambda}{c_{1}}.
\end{equation*}
Multiplying by $2u(y)>0$ (use $u(y)>u(\sigma))$ the two sides,
 we obtain
$$\frac{d}{dy}u^{2}(y) \leq -2\xi_{1}^{2} \frac{u(y)}{c(y)}\int_{y}^{H}u(\sigma)c(\sigma)\,d\sigma\leq -2\frac{\xi_{1}^{2}}{c(y)}\int_{y}^{H}u^{2}(\sigma)c(\sigma)\,d\sigma, \, y\in\lbrack \beta,H\rbrack.$$

 Integration over $\lbrack \beta,H\rbrack,$ 
  yields
\begin{equation*}
\label{equation-lowerbound:4}
0- u^{2}(\beta) \leq  -2\xi_{1}^{2}\int_{\beta}^{H}\frac{1}{c(y)}\left( \int_{y}^{H}u^{2}(\sigma)c(\sigma)\,d\sigma\right)dy = -2\xi_{1}^{2}\int_{\beta}^{H}\left( \int_{\beta}^{\sigma}u^{2}(\sigma)\frac{c(\sigma)}{c(y)}\,dy\right)d\sigma,
\end{equation*}
hence
\begin{equation}
\label{equation-div-lowerbound:2}
u^{2}(\beta)\geq 2\,\frac{c_{m}}{c_{M}}\,\xi_{1}^{2}\int_{\beta}^{H}(\sigma-\beta)u^{2}(\sigma)\,d\sigma.
\end{equation}
The two inequalities ~\eqref{equation-div-upperbound:1} and ~\eqref{equation-div-lowerbound:2} give
\begin{equation}
\label{equation-div-upperlowerbound:1}
(a-\beta)\int_{a}^{b}u^{2}(\sigma)\,d\sigma\leq\int_{\beta}^{H}(\sigma-\beta)u^{2}(\sigma)\,d\sigma\leq \gamma\, \frac{(1 + \xi_{M}^{2})^{s}}{\xi_{1}^{2}}\Vert u\Vert^{2}_{L^{2}(0,H)}.
\end{equation}
 Taking any $\frac12<s<1$ concludes the proof of ~\eqref{equation-decreasing:1}  since $\mu_{k}^{2}\geq \frac{\xi_{M}^{2}}{c_M}\geq \xi_{1}^{2}\geq \frac{\varepsilon}{c_{1}}\mu_{k}^{2}.$

The same reasoning works if $\lbrack a,b\rbrack\subseteq (0,\alpha)$ since in this case $(cu')(0)>0$ hence $u$ is increasing and positive  in the interval.
\end{proof}

\section{\textbf{NON-GUIDED WAVES- THE REGULAR CASE }}
\label{section-regular-ng}
   An (infinite) set of non-guided normalized eigenfunctions  is characterized
  by the fact that in each layer $\Omega_{a,b}$ there is a uniform positive lower bound for the masses in the layer, valid for all  elements of the set.

  As observed in the Introduction, for each eigenvalue $\lambda$ of $A,$ there exists at least one pair $(k,\ell)$ so that $\lambda=\beta_{k,\ell}$ is the $\ell$-th eigenvalue of $A_{k}.$

In this section we prove Theorem ~\ref{theo-C1-noguided} under the following regularity assumption on $c(y).$

\begin{assume}\label{assume-c(y)}
We assume that $c(y)\in C^2[0,H].$
\end{assume}

    We consider the equation 
   \begin{equation}
\label{regular-introduction:1-ng}
\aligned
 (c(y)u(\ylambk)')' + (\lambda-c(y)\mu^{2}_{k})u(\ylambk)\\
 =(c(y)u(\ylambk)')' + \mu_{k}^{2}\, p(\ylambk)u(\ylambk)=0,\\
 p(\ylambk)= \frac{\lambda}{\mu^2_{k}}-c(y).
 \endaligned
\end{equation}
$$u(0;\lambda,k) = u(H;\lambda,k) = 0.$$
 We shall deal in this section with eigenvalues $\lambda$ such that
 (see ~\eqref{eqAceps})
$$(\mu_k,\lambda)\in  \mathscr{A}^{c}_\eps.$$
In view of ~\eqref{eqAceps}

\begin{equation}
\label{ineq-regular-intermedzone:2}
p(\ylambk) \geq \eps, \quad y\in \lbrack 0, H\rbrack.
\end{equation}
In addition, for some $\Lambda>\eps$ we limit the eigenvalues by $\lambda<(c_M+\Lambda)\mu_k^2.$
\begin{thm}
\label{thm-ng-Liouville:1-new}Let
$$\mathscr{A}^{c}_{\eps,\Lambda}= \mathscr{A}^{c}_\eps\cap \set{\lambda\leq(c_{M} + \Lambda)\mu_{k}^{2}}.$$

 Let $u(\ylambk)$ be a normalized solution to \eqref{regular-introduction:1-ng}. Then for every interval $(a,b)\subseteq (0,H)$
\begin{equation}
\label{ineq-ng-Liouville:1-new}
\inf_{(\mu_{k},\lambda)\in \mathscr{A}^{c}_{\eps,\Lambda}}\int_{a}^{b} u(\ylambk)^{2}dy>0.
\end{equation}

\end{thm}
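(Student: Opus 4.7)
My plan is to exploit the $C^2$ regularity of $c$ via the classical Liouville transformation, reducing \eqref{regular-introduction:1-ng} to an oscillatory Schr\"odinger-type equation whose solutions can be tracked by a quasi-energy. Setting $\tau(y) = \int_0^y c(s)^{-1/2}\,ds$ and $v(\tau) = c(y)^{1/4}u(y)$, a direct calculation converts the equation into the canonical form
$$v_{\tau\tau} + K(\tau;\lambda,k)\,v = 0, \qquad K = \mu_k^2 p - \psi,$$
with $v(0)=v(T)=0$, $T=\int_0^H c^{-1/2}$, where $\psi = \tfrac{c''}{4}-\tfrac{(c')^2}{16c}$ is bounded in terms of $c$ alone. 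Since $dy = \sqrt{c}\,d\tau$ and $u^2 = c^{-1/2}v^2$, one has the pleasant identity $u^2\,dy = v^2\,d\tau$, so the normalization reads $\int_0^T v^2\,d\tau = 1$ and the desired estimate becomes $\inf\int_{\tau_a}^{\tau_b}v^2\,d\tau>0$, where $\tau_a,\tau_b$ are the images of $a,b$. On $\mathscr{A}^c_{\eps,\Lambda}$ one has $\eps \leq p \leq c_M - c_m + \Lambda$ uniformly, so for all $\mu_k$ above some threshold $N_0$ we have $K \geq \mu_k^2\eps/2$.

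The core is to track the quasi-energy $E(\tau) := v^2 + v_\tau^2/K$. Using $v_{\tau\tau}=-Kv$ yields $E' = -v_\tau^2 K'/K^2$, so $|(\ln E)'| \leq |K'|/K$. Since $|K'| \leq \mu_k^2\|c_\tau\|_\infty + \|\psi_\tau\|_\infty$ while $K \geq \mu_k^2\eps/2$, the ratio $|K'/K|$ is bounded by a constant depending only on $c$ and $\eps$, uniformly on $\mathscr{A}^c_{\eps,\Lambda} \cap \set{\mu_k \geq N_0}$. Gronwall then gives $E(\tau) \asymp E(0)$ on $[0,T]$ with constants independent of $(\mu_k,\lambda)$. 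Combining the pointwise identity $v_\tau^2 + Kv^2 = KE$ with $(vv_\tau)' = v_\tau^2 - Kv^2$ produces
$$\int_{\tau_1}^{\tau_2} Kv^2\,d\tau \;=\; \tfrac{1}{2}\int_{\tau_1}^{\tau_2} KE\,d\tau \;-\; \tfrac{1}{2}\bigl[vv_\tau\bigr]_{\tau_1}^{\tau_2},$$
and the AM--GM bound $|vv_\tau|/K \leq E/(2\sqrt{K})$ makes the boundary contribution $O(\mu_k^{-1})$ relative to the bulk term. Applying this on $[0,T]$ together with $\int_0^T v^2\,d\tau = 1$ forces $E$ to stay squeezed between two positive constants; applying it on $[\tau_a,\tau_b]$ then extracts a uniform positive lower bound for $\int_{\tau_a}^{\tau_b} v^2\,d\tau$.

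Finally, the complement $\mathscr{A}^c_{\eps,\Lambda} \cap \set{\mu_k < N_0}$ is finite, since each such $\mu_k$ admits only finitely many $\ell$ with $\beta_{k,\ell} \leq (c_M+\Lambda)\mu_k^2$; for each individual $(\mu_k,\lambda)$, the nontrivial $C^2$ solution $u$ is an eigenfunction of a regular Sturm--Liouville problem, hence has isolated simple zeros, so $\int_a^b u^2\,dy > 0$, and the minimum over this finite set is positive. The main obstacle is keeping every constant uniform across the two-parameter family: this succeeds precisely because the Liouville change of variables is $(\mu_k,\lambda)$-independent, while $p$, $\psi$, and $K'/K$ all enjoy uniform bounds on $\mathscr{A}^c_{\eps,\Lambda}$; the other delicate point is verifying that the remainder $|vv_\tau|/K$ at the endpoints $\tau_a,\tau_b$ is genuinely asymptotically small compared to the interior contribution $\tfrac12\int_{\tau_a}^{\tau_b} E\,d\tau$, which is controlled by the lower bound on $K$ for $\mu_k$ large.
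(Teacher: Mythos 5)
Your approach is sound in broad outline and genuinely different from the paper's. The paper performs the change of variables in two stages --- first $dy/dt = c$, then $d\xi/dt = \sqrt{\widetilde{p}}$ --- so that the spectral parameter is entirely absorbed into the new independent variable $\xi$, yielding $\eta_{\xi\xi} + \mu_k^2\eta = \rho\,\eta$ with $\rho$ merely bounded; it then solves the resulting Volterra equation and extracts the almost-sinusoidal asymptotics $|\eta - \alpha\sin(\mu_k\xi)| \leq \gamma\mu_k^{-1}$, from which the lower bound follows by explicit integration. You instead keep $\lambda$ in the coefficient (your $K\sim\mu_k^2$) and run a Gronwall estimate on the Pr\"ufer-type quasi-energy $E = v^2 + v_\tau^2/K$. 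Your route yields weaker information (no asymptotic form of $v$, only two-sided control of $E$) but is actually closer in spirit to the \emph{minimal-amplitude} argument the paper deploys in Section 5 for non-smooth $c$; the paper's Volterra route buys sharper asymptotics and, importantly, uses the $C^2$ hypothesis only through boundedness of $\rho$.

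There is, however, one genuine gap in the argument as written. Your Gronwall step bounds $|K'/K|$ by $\bigl(\mu_k^2\|c_\tau\|_\infty + \|\psi_\tau\|_\infty\bigr)/(\mu_k^2\eps/2)$, which implicitly requires $\psi'$, and hence $c'''$, to be bounded. But Assumption \ref{assume-c(y)} posits only $c\in C^2[0,H]$; the Liouville correction $\psi = \tfrac{c''}{4} - \tfrac{(c')^2}{16c}$ is then merely continuous and generically not differentiable, so $\|\psi_\tau\|_\infty$ need not exist and your energy estimate fails. The paper's proof never differentiates $\rho$, which is why it survives at the $C^2$ level. The repair is not hard but does force a different choice of quasi-energy: track $E_p := v^2 + v_\tau^2/(\mu_k^2 p)$ rather than $v^2 + v_\tau^2/K$. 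Using $v_{\tau\tau} = -(\mu_k^2 p - \psi)v$ one finds
\begin{equation*}
E_p' \;=\; \frac{2\psi\,v\,v_\tau}{\mu_k^2 p} \;-\; \frac{v_\tau^2\,p'}{\mu_k^2 p^2},
\end{equation*}
and AM--GM gives $|v v_\tau|\leq \tfrac{1}{2}\mu_k\sqrt{p}\,E_p$, hence
\begin{equation*}
|E_p'|\;\leq\;\Bigl(\frac{\|\psi\|_\infty}{\mu_k\sqrt{\eps}} + \frac{\|c_\tau\|_\infty}{\eps}\Bigr)E_p,
\end{equation*}
a Gronwall rate controlled by $c'$, $c''$, and $\eps$ alone. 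Since $K\asymp \mu_k^2 p$ on $\mathscr{A}^c_{\eps,\Lambda}$ for $\mu_k$ large, $E_p\asymp E$, and the rest of your argument goes through unchanged under the paper's actual hypothesis.
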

\begin{proof}
Define a new coordinate $t=t(y)$ so that
\be\label{eqywt-regular} \frac{dy}{dt}=c(y),\quad t(0)=0.\ee
Let $$w(t;\lambda,k)=u(\ylambk),\quad t\in [0,\widetilde{H}],\,\,\widetilde{H}=\int_0^Hc^{-1}(y)dy .$$ Then $w(t;\lambda,k)<0$ in a neighborhood of $t(0)$ ($t(0)$ excluded) while by ~\eqref{regular-introduction:1-ng} $\frac{dw}{dt}$ is  increasing (as long as $u<0$). It follows that there is a first point $t_1=t(z_1)$
 so that $w(t_1;\lambda,k)=u(z_1;\lambda,k)=0.$
In a similar way we arrive at a second zero $w(t_2;\lambda,k)=u(z_2;\lambda,k)=0$ and so forth.

In terms of the new coordinate $t$ the equation ~\eqref{regular-introduction:1-ng} can be rewritten as
\begin{equation}
\label{equation-dt2wt-regular}\aligned
 \frac{d^2}{dt^2}w(t;\lambda,k) + \mu_{k}^{2}\, \widetilde{p}(t;\lambda,k)w(t;\lambda,k)=0,\\
\widetilde{p}(t;\lambda,k)= \frac{\lambda c(y(t))}{\mu^2_{k}}-c^2(y(t)),\quad t\in [0,\widetilde{H}].
\endaligned \end{equation}
 Instead of ~\eqref{ineq-regular-intermedzone:2} we now have, since $\lambda>(c_{M}+\eps)\mu^2_k,$
 \begin{equation}
\label{ineq-regular-intermedzone:3}
\widetilde{p}(t;\lambda,k) \geq \kappa_1=\eps c_m, \quad t\in [0,\widetilde{H}],
\end{equation}
and also, since $\lambda<(c_M+\Lambda)\mu^2_k,$
\begin{equation}
\label{ineq-regular-intermedzone:4}
\widetilde{p}(t;\lambda,k) \leq \kappa_2=(c_M+\Lambda)c_M-c_m^2, \quad t\in [0,\widetilde{H}].
\end{equation}
 We apply the Liouville transformation ~\cite[Chapter IV]{erdelyi} to Equation ~\eqref{equation-dt2wt-regular}:

       \be\label{eqliouv}
       \xi=\int_0^t\sqrt{\widetilde{p}(s;\lambda,k)}ds,\quad \eta(\xi;\lambda,k)=[\widetilde{p}(t;\lambda,k)]^{\frac14}w(t;\lambda,k).
       \ee
     Note that $\xi\in[0,\overline{H}],\,\,\overline{H}=\int_0^{\widetilde{H}} \sqrt{\widetilde{p}(s;\lambda,k)}ds\leq \widetilde{H} \sqrt{\kappa_2}.$

     The function $\eta(\xi;\lambda,k)$ satisfies the equation
     \be\label{eqetaxi}
     \frac{d^2\eta}{d\xi^2}+\mu_k^2\eta=\rho(\xi;\lambda,k)\eta,
     \ee
       where
       $$\rho(\xi;\lambda,k)=\frac14\frac{\widetilde{p}''(t;\lambda,k)}{\widetilde{p}(t;\lambda,k)^2}-
       \frac{5}{16}\frac{\widetilde{p}'(t;\lambda,k)^2}{\widetilde{p}(t;\lambda,k)^3}.$$

    Note that the form of ~\eqref{eqetaxi} is the starting point for the asymptotic behavior of solutions involving potential perturbations. However in our case  the potential depends on the spectral parameter.

        Observe that under our assumptions
         the family
          $$\mathscr{B}=\set{\rho(\xi;\lambda,k),\quad (\mu_k,\lambda)\in\mathscr{A}^{c}_{\eps,\Lambda}}$$
           is \textbf{uniformly bounded}.

       Since $\eta(0;\lambda,k)=0,$ Equation ~\eqref{eqetaxi}
       entails, with $\alpha=\mu_k^{-1}\eta'(0;\lambda,k),$
       \be\label{eqetaxiinteg}
       \eta(\xi;\lambda,k)=\alpha \sin(\mu_k\xi)+
       \mu_k^{-1}\int_0^\xi\sin(\mu_k(\xi-\tau))
       \rho(\tau;\lambda,k)
       \eta(\tau;\lambda,k)d\tau,\quad
       \xi\in[0,\overline{H}].
       \ee

       The uniform boundedness of the family $\mathscr{B}$ implies that
              the Volterra integral equation
       ~\eqref{eqetaxiinteg} is
       solvable for any sufficiently large $\mu_k$ ~\cite[Chapter 1]{widom}. Furthermore,  there exist $\gamma,\,\mu_{0}>0$ so that
         \be\label{eqcompareetasin}
         |\eta(\xi;\lambda,k)-\alpha \sin(\mu_k\xi)|\leq \gamma\mu_k^{-1},\quad \xi\in[0,\overline{H}],\,\,\mu_k>\mu_0,\,\rho\in \mathscr{B}.
         \ee
         We now make the following observations.
         \begin{itemize}
         \item Recall that $\int_0^H u(\ylambk)^2\,dy=1,$ hence in light of ~\eqref{ineq-regular-intermedzone:3}-~\eqref{ineq-regular-intermedzone:4} and $\lambda\leq (c_{M} + \Lambda)\mu_{k}^{2}$
          there exist two constants $0<\zeta_1<\zeta_2<\infty$ so that
             \be\label{eqeta2integ}
            \zeta_1\leq \int_0^{\overline{H}}\eta(\xi;\lambda,k)^2d\xi\leq \zeta_2,\quad \mu_{k} >\mu_0.
             \ee
             \item It follows from ~\eqref{eqcompareetasin}-\eqref{eqeta2integ} that there exist two constants $0<r_1<r_2<\infty$ so that
             \be\label{eqalphar1r2}
                r_1\leq |\alpha|\leq r_2, \quad \mu_{k}>\mu_0.
             \ee
         \end{itemize}
          Let $(\xi_1,\xi_2)$ be the interval corresponding to $(a,b).$ The Cauchy-Schwarz inequality, applied to 
         ~\eqref{eqcompareetasin}, yields
         $$
          \| \eta(\xi;\lambda,k)\|_{L^2((\xi_1,\xi_2),d\xi)}\geq |\alpha|\| \sin(\mu_k\xi)\|_{L^2((\xi_1,\xi_2),d\xi)}-\gamma\mu_k^{-1}(\xi_2-\xi_1)^{\frac12},
         $$
         namely,
         \be\label{eqetasin}
         \| \eta(\xi;\lambda,k)\|_{L^2((\xi_1,\xi_2),d\xi)}\geq |\alpha|\Big[\frac{\xi_2-\xi_1}{2}-\frac{1}{2\mu_k}\sin(2\mu_k\xi)\Big|^{\xi_2}_{\xi_1}\Big]^{\frac12}-
         \gamma\mu_k^{-1}(\xi_2-\xi_1)^{\frac12}.
         \ee
         Increasing $\mu_0$ (if needed) so that $\mu_0>\frac{4}{\xi_2-\xi_1}$ we conclude that
          $$\| \eta(\xi;\lambda,k)\|_{L^2((\xi_1,\xi_2),d\xi)}\geq ( \frac14|\alpha|- \gamma\mu_k^{-1})(\xi_2-\xi_1)^{\frac12} .$$
         
                         Noting ~\eqref{eqalphar1r2} and requiring $\mu_0>\frac{8\gamma}{r_1},$ we finally obtain
             $$ \int_{\xi_1}^{\xi_2}\eta(\xi;\lambda,k)^2d\xi\geq\Big( \frac{r_1}{8}\Big)^2(\xi_2-\xi_1),\quad \mu_{k}>\mu_0.$$
             
             Note that there are at most finitely many normalized eigenfunctions associated
              with values $\mu_k<\mu_0,$ since $\lambda$ is bounded from above.

              Switching back to the original variable $y$ and the function $u(\ylambk)$  we get ~\eqref{ineq-ng-Liouville:1-new}.
  \end{proof}   

 \section{\textbf{NON-GUIDED WAVES-BEYOND THE REGULAR CASE }}
\label{section-sufficient condition-ng}
$\empty$\\
\noindent
  
  In this  section we prove Theorem  ~\ref{theo-C1-noguided} for a coefficient $c(y)$ that does not satisfy the regularity Assumption ~\ref{assume-c(y)}. We first establish some properties related to the oscillatory character of the solutions.

 As observed in the Introduction, for each eigenvalue $\lambda$ of $A,$ there exists at least one pair $(k,\ell)$ so that $\lambda=\beta_{k,\ell}$ is the $\ell$-th eigenvalue of $A_{k}.$
  Let $\lambda = \beta_{k,\ell}>0$ be an eigenvalue of $-\nabla\cdot c(y)\nabla$ in $L^2(\Omega,dx'dy)$ and $u(y;\lambda,k):=u_{\lambda, k}(y)$ the normalized  associated (reduced) eigenfunction (as in \eqref{equation-introduction:1} and \eqref{equation-introduction:1bis}). It satisfies the equation

\begin{equation}
\label{equation-introduction:1-ng}
\aligned
 (c(y)u(\ylambk)')' + (\lambda-c(y)\mu^{2}_{k})u(\ylambk)\\
 =(c(y)u(\ylambk)')' + \mu_{k}^{2}\, p(\ylambk)u(\ylambk)=0,\\
 p(\ylambk)= \frac{\lambda}{\mu^2_{k}}-c(y).
 \endaligned
\end{equation}
$$u(0;\lambda,k) = u(H;\lambda,k) = 0.$$
 We shall deal in this section with eigenvalues $\lambda$ such that
 (see ~\eqref{eqAceps})
$$(\mu_k,\lambda)\in  \mathscr{A}^{c}_\eps.$$
In view of ~\eqref{eqAceps}

\begin{equation}
\label{ineq-intermedzone:2}
p(\ylambk) \geq \eps, \quad y\in \lbrack 0, H\rbrack.
\end{equation}

 The aim of the subsequent subsections is to claim that the set of eigenfunctions
associated with eigenvalues satisfying $(\mu_k,\lambda)\in\mathscr{A}^{c}_\eps ,$
for any  $c(y)\in\mathscr{K}$ (see ~\eqref{eq-def-K})
consists of non-guided eigenfunctions when a particular sufficient condition is satisfied, with $c(y)$ less regular.


\noindent\\

Consider a pair $(\mu_k,\lambda)\in\mathscr{A}^{c}_\eps .$
Let $u(\ylambk)$ be a normalized solution  to ~\eqref{equation-introduction:1-ng}, associated with $(\mu_{k}, \lambda).$
 Without loss of generality we may assume that $u'(0;\lambda,k)<0.$
 
 As in  ~\eqref{eqywt-regular}  we define a new coordinate $t$ by
\be\label{eqnewyt}\frac{dy}{dt}=c(y),\quad t(0)=0,\,t(H)=\widetilde{H}=\int_0^Hc^{-1}(y)dy.\ee
 However, because of the irregularity of $c(y)$ we need to pay attention to domain considerations. 
  The two-sided boundedness of $c(y)$ readily implies that $y(t),\,\,t(y)$ are Lipschitz on $[0,\widetilde{H}],\,\,[0,H],$ respectively.  In particular, they are in $H^1$ on their respective domains. Now $u(\ylambk)$ is bounded (indeed $H\ddot{o}lder $ continuous in $[0,H]$) since it is in $ H^1_0.$ Furthermore,  $c(y)u'(\ylambk)\in H^1$ hence it is uniformly bounded, and thus $u'(\ylambk) $ is uniformly bounded. We infer that $u(\ylambk)$ is Lipschitz, hence $w(t;\lambda,k)=u(y(t);\lambda,k)$ is Lipschitz and
  $$\frac{d}{dt}w(t;\lambda,k)=c(y(t))u'(\ylambk)|_{y=y(t)}.$$
  From equation ~\eqref{equation-introduction:1-ng} we infer that $(c(y)u'(\ylambk))'$ is bounded so $c(y)u'(\ylambk)$ is Lipschitz hence so is the composition  $c(y(t))u'(\ylambk)|_{y=y(t)}.$ It follows that
  $$\frac{d^2}{dt^2}w(t;\lambda,k)=(c(y)u'(\ylambk))'|_{y=y(t)}\cdot c(y(t)).$$
  
We may switch to the new coordinate $t$ and rewrite  the equation ~\eqref{equation-introduction:1-ng}  as
\begin{equation}\aligned
\label{equation-dt2wt}
 c(y(t))^{-1}\frac{d^2}{dt^2}w(t;\lambda,k) + \mu_{k}^{2}\, p(t;\lambda,k)w(t;\lambda,k)=0,\\
 p(t;\lambda,k)= \frac{\lambda}{\mu^2_{k}}-c(y(t)).
\endaligned \end{equation}

Let
\begin{equation*}
Z(\lambda,k) = \{z_{0} =0<z_{1}<z_{2}<\ldots < z_{ \mathfrak{s}} = H,\,\,\, u(z_{i};\lambda, k) = 0, \quad 0\leq  i\leq \mathfrak{s}\}\subseteq \lbrack 0, H\rbrack
\end{equation*}
be the set of zeros of the function $u(\ylambk).$

 We have $w(t;\lambda,k)<0$ in some interval $(t(0),t(0)+\delta)$ while by ~\eqref{equation-dt2wt} $\frac{dw}{dt}$ is  increasing (as long as $u<0$). It follows that there is a first point $t_1=t(z_1)$
 so that $w(t_1;\lambda,k)=u(z_1;\lambda,k)=0.$
In a similar way we arrive at a second zero $w(t_2;\lambda,k)=u(z_2;\lambda,k)=0$ and so forth.

 It follows from \eqref{ineq-intermedzone:2} and the comparison principle ~\cite[Section X.6]{birkhoff},  ~\cite[Section 8.1]{coddington} applied to ~\eqref{equation-dt2wt} that there exists a constant $\gamma>0,$ independent of $\eps,\,\lambda,\,k$ such that

$$t_{i+1} -t_{i}  \leq \gamma\mu_k^{-1}\sqrt{ \frac{1}{\eps}}, \quad 0\leq i\leq \mathfrak{s} - 1.$$
In view of ~\eqref{eqnewyt} this estimate can be restated in terms of the original coordinate $y,$ perhaps with a different constant $\gamma>0,$
\begin{equation}
\label{ineq-intermedzone:3}
z_{i+1} -z_{i}  \leq \gamma\mu_k^{-1}\sqrt{ \frac{1}{\eps}}, \quad 0\leq i\leq \mathfrak{s} - 1.
\end{equation}

  The following claim extends ~\eqref{ineq-intermedzone:3} and will be useful in the sequel. It says that the distance between two consecutive zeros of an (oscillatory) eigenfunction can be made arbitrarily small, if we drop a finite number of eigenfunctions associated with ``low'' eigenvalues. 
\begin{claim}
\label{claim:1}
Let $c(y)\in\mathscr{K}.$ For each $\alpha>0,\,\eps>0$ there exists $\lambda_{\alpha,\varepsilon}$ such that $\lambda>\lambda_{\alpha,\varepsilon}$
 implies
 $$z_{i+1} -z_{i} < \alpha,\quad 0\leq i\leq \mathfrak{s} - 1.$$

 In particular, $\lambda_{\alpha,\varepsilon}$ can be chosen uniformly for all $c(y)\in\mathscr{K}.$ For each $c(y)\in\mathscr{K}$ there are at most finitely many eigenvalues $\lambda<\lambda_{\alpha,\varepsilon}.$
\end{claim}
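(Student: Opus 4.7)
The plan is to sharpen the already-established spacing estimate \eqref{ineq-intermedzone:3} so that the bound decays in $\lambda$ rather than in $\mu_k$. The existing bound $z_{i+1}-z_i\leq\gamma\mu_k^{-1}/\sqrt{\eps}$ is insufficient because membership in $\mathscr{A}^c_\eps$ only yields $\mu_k^2\leq\lambda/(c_M+\eps)$, i.e.\ an \emph{upper} bound on $\mu_k$; thus $\mu_k$ can remain small even for arbitrarily large $\lambda$ (the ``low $k$, high $\ell$'' scenario). Recognizing this mismatch is the main conceptual step.

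I would start by multiplying the transformed equation \eqref{equation-dt2wt} by $c(y(t))$ to put it in the standard form $w_{tt}+q(t;\lambda,k)\,w=0$ with $q(t;\lambda,k)=c(y(t))(\lambda-c(y(t))\mu_k^2)$. Then I would factor out $\lambda$ and use $c(y)\mu_k^2/\lambda\leq c_M/(c_M+\eps)$, valid on $\mathscr{A}^c_\eps$, to obtain
\[
q(t;\lambda,k)=\lambda\,c(y(t))\Bigl(1-\frac{c(y(t))\mu_k^2}{\lambda}\Bigr)\geq \frac{c_m\,\eps}{c_M+\eps}\,\lambda,
\]
a lower bound that depends on the coefficient only through $c_m$ and $c_M$.

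Next I would invoke the Sturm comparison principle (the same one cited for \eqref{ineq-intermedzone:3}) to conclude $t_{i+1}-t_i\leq \pi\sqrt{(c_M+\eps)/(c_m\,\eps\,\lambda)}$, and convert to the original $y$-coordinate via $dy/dt=c(y)\leq c_M$ to get
\[
z_{i+1}-z_i\leq c_M\,\pi\sqrt{\frac{c_M+\eps}{c_m\,\eps\,\lambda}}.
\]
Choosing $\lambda_{\alpha,\eps}$ so that the right-hand side is less than $\alpha$ settles the spacing statement; the expression depends only on $\alpha,\eps,c_m,c_M$, so the same threshold works uniformly over $c\in\mathscr{K}$.

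For the concluding finiteness assertion, I would fix $c\in\mathscr{K}$ and observe that any $(\mu_k,\lambda)\in\mathscr{A}^c_\eps$ with $\lambda<\lambda_{\alpha,\eps}$ must satisfy $\mu_k^2\leq\lambda_{\alpha,\eps}/(c_M+\eps)$, restricting $k$ to a finite set; for each such $k$ the eigenvalues $\{\beta_{k,\ell}\}_{\ell\geq1}$ of $A_k$ form an increasing sequence tending to $\infty$, so only finitely many $\ell$'s can satisfy $\beta_{k,\ell}<\lambda_{\alpha,\eps}$. The genuinely new point in the proof is only the initial decision to upgrade the prior $\mu_k^{-1}$ estimate into a $\lambda^{-1/2}$ estimate; the Sturm comparison and the coordinate change are standard.
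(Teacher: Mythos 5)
Your argument is correct. It uses the same core machinery as the paper (pass to the transformed equation \eqref{equation-dt2wt}, then apply Sturm comparison with a constant-coefficient oscillator), but you streamline the key estimate: you bound the potential $q(t;\lambda,k)=c(y(t))\bigl(\lambda-c(y(t))\mu_k^2\bigr)$ from below by $\tfrac{c_m\eps}{c_M+\eps}\,\lambda$, a single inequality valid uniformly on $\mathscr{A}^c_\eps$, which gives $z_{i+1}-z_i\lesssim\lambda^{-1/2}$ and hence an explicit $\lambda_{\alpha,\eps}$. The paper instead keeps the factorized bound $q\geq c_m\eps\,\mu_k^2$ and handles the residual ``small $\mu_k$, large $\lambda$'' regime by a separate case split, choosing $\mu_{0,\eps}$ and $\lambda_{0,\eps}=(c_M+\eps)\mu_{0,\eps}^2$ and taking the max. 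Both arguments are sound; your version has the advantage of exhibiting a concrete decay rate $z_{i+1}-z_i\leq c_M\pi\sqrt{(c_M+\eps)/(c_m\eps\lambda)}$ in place of the paper's $\mu_k^{-1}$ bound \eqref{ineq-intermedzone:3}, which makes the uniformity over $\mathscr{K}$ and the finiteness of the excluded set immediate. Your observation that \eqref{ineq-intermedzone:3} alone is insufficient (since membership in $\mathscr{A}^c_\eps$ only caps $\mu_k$ from above) correctly identifies the point the claim has to address.
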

\begin{proof}
Recall that we are assuming $(\mu_k,\lambda) \in\mathscr{A}^{c}_\eps$ so that by ~\eqref{ineq-intermedzone:2}
$p(y(t);\lambda,k)\geq \eps.$ Thus by the comparison principle it suffices to compare ~\eqref{equation-dt2wt} with the constant coefficient equation 
 $$\frac{d^2}{dt^2}w(t;\lambda,k) + \eta^2w(t;\lambda,k)=0.$$
 For any  sufficiently large $\eta$  the distance between two consecutive zeros is less than $\alpha/2.$
  Pick $\mu_{0,\varepsilon}> \frac{\eta}{\sqrt{\eps c_m}}.$  Then  $c(y)p(\ylambk)\mu_k^2>\eta^2$ if $\mu_k>\mu_{0,\varepsilon}.$
 Next choose $\lambda_{0,\varepsilon} = (c_M +\eps)\mu_{0,\eps}^2$.
 Clearly for any $\lambda>\lambda_{0,\varepsilon}$ and $\mu_k\leq \mu_{0,\varepsilon}$ we have $c(y)p(\ylambk)\mu_k^2>\eta^2.$
 Note that there are at most finitely many pairs $(\mu_k,\lambda)\in \mathscr{A}^{c}_\eps$ with $\mu_k\leq\mu_{0,\varepsilon}$ and $\lambda\leq \lambda_{0,\varepsilon}.$
   Finally, take 
   $\lambda_{\alpha,\varepsilon}=\max \Big[(c_M+\eps)\mu_{0,\varepsilon}^2,\lambda_{0,\varepsilon}\Big].$

   \end{proof}

\noindent
In particular, without further assumptions, the solutions are oscillatory  between consecutive zeros. However, in various sub-intervals their amplitudes might decay to zero, hence concentrating in the complementary domain. It is precisely this behavior that we seek to exclude.

 The implications of the assumption that $c(y)$ is subject only to the {\textbf{\textit{minimal amplitude hypothesis}}}  ( Definition ~\ref{definition-hypo-minam:1}) will now be studied. No regularity is required of $c(y),$
and only condition $\mbox{\bf(H)}$ (see ~\eqref{eqassumptioncy}) is imposed.\\
We have already seen that the lack of regularity does not affect the oscillatory character of the solutions. The remaining issue is to see that the masses of the oscillatory solutions  $\set{ u(\ylambk),\,(\mu_{k},\lambda)\in\mathscr{A}^{c}_\eps}$  in any interval remain uniformly bounded away from zero.\\ This is addressed in the following theorem
which is a somewhat more detailed form of  Theorem  ~\ref{theo-C1-noguided}. Its proof is straightforward, reducing the non-concentration issue to a study of the minimal amplitude hypothesis for various functional classes.

\begin{thm}
 \label{thmcyirreg-minamp} Let $c(y)\in \mathscr{K}$ satisfy the minimal amplitude hypothesis (Definition ~\ref{definition-hypo-minam:1}).
 Consider the family $\set{ u(\ylambk),\,(\mu_{k},\lambda)\in\mathscr{A}^{c}_\eps}$ of  normalized solutions to ~\eqref{equation-introduction:1-ng}.

                Then, for every interval $(a,b)\subseteq (0,H),$ there exist  constants
                \begin{itemize}
                \item $d>0$ depending on $\eps,\,c_M,\,c_m,\, b-a,\,\mathfrak{r_{c,\eps}},$
                    \item $\lambda_0>0$ depending on $\eps,\,c_M,\,c_m,\, b-a$
                \end{itemize}
                              such that
                \be\label{equyneq0}
                \int_{a}^{b}\, u(y;\lambda,k)^2dy\geq d,\quad \,\,
                 \lambda>\lambda_0.
                \ee
                 This estimate is equivalent to  $\int_{\Omega_{a,b}}\,v_{\lambda}(x)^2dx\geq d,$   where $v_\lambda$ is the eigenfunction of $-\nabla\cdot c(y)\nabla$ associated with $\lambda$ and $u(y;\lambda,k)$ (see ~\eqref{equation-introduction:1bis}).
             \end{thm}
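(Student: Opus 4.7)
The plan is to work in the $t$-coordinate $dy/dt=c(y)$ introduced in \eqref{eqnewyt}, setting $w(t)=u(y(t))$, so that $\dot w=c\,u'$ and the minimal amplitude hypothesis reads pointwise as $w(t)^2+\dot w(t)^2\geq \mathfrak{r}_{c,\eps}^2$ on $[0,\widetilde H]$, while $w$ satisfies $\ddot w+q(t)\,w=0$ a.e.\ with $q(t)=\mu_k^2\, c(y(t))\,p(y(t);\lambda,k)\geq c_m\eps\mu_k^2>0$ (cf.\ \eqref{equation-dt2wt} and \eqref{ineq-intermedzone:2}). Since $\int_a^b u^2\,dy=\int_{t(a)}^{t(b)} w^2\, c\,dt\geq c_m\int_{t(a)}^{t(b)} w^2\,dt$ and $t(b)-t(a)\geq(b-a)/c_M$, the task reduces to a lower bound on $\int_{t(a)}^{t(b)} w^2\,dt$.

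Next I would invoke Claim~\ref{claim:1} with $\alpha:=c_m(b-a)/(4c_M)$ and set $\lambda_0:=\lambda_{\alpha,\eps}$: for $\lambda>\lambda_0$, every distance between consecutive zeros of $u$ in $(0,H)$ is at most $\alpha$, so in $t$-coordinates every gap satisfies $t_{i+1}-t_i\leq \alpha/c_m\leq (t(b)-t(a))/4$. Selecting the consecutive half-oscillation intervals $(t_{i_j},t_{i_j+1})$ whose closures lie in $[t(a),t(b)]$ produces finitely many pairwise disjoint intervals whose total length is at least $(t(b)-t(a))/2$.

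The heart of the proof, and the step I expect to be the main obstacle since $c$ carries no regularity whatsoever, is the per-oscillation lower bound $\int_{t_i}^{t_{i+1}} w^2\,dt\geq \tfrac{1}{3}\mathfrak{r}_{c,\eps}^2\,(t_{i+1}-t_i)$. The key observation that bypasses any WKB-style asymptotics is that on each half-oscillation $w$ is not only oscillatory but \emph{concave}: on $(t_i,t_{i+1})$ we have $w>0$ (say), and $\ddot w=-q(t)\,w<0$ a.e., so $\dot w$ is nonincreasing and $w$ is concave on $[t_i,t_{i+1}]$. Let $t_i^{\ast}$ be the unique extremum and $M_i:=w(t_i^{\ast})$; the minimal amplitude hypothesis applied at $t_i^{\ast}$ (where $\dot w=0$) yields $M_i\geq \mathfrak{r}_{c,\eps}$. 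By concavity, $w(t)\geq M_i(t-t_i)/(t_i^{\ast}-t_i)$ on $[t_i,t_i^{\ast}]$, which integrates to $\int_{t_i}^{t_i^{\ast}} w^2\,dt\geq \tfrac{1}{3}M_i^2(t_i^{\ast}-t_i)$, and symmetrically on $[t_i^{\ast},t_{i+1}]$; adding the two gives the claim.

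Summing this per-oscillation bound over the selected intervals and chaining the inequalities yields $\int_a^b u^2\,dy\geq c_m\,\mathfrak{r}_{c,\eps}^2\,(b-a)/(6c_M)=:d$, where $d$ depends only on $c_m$, $c_M$, $b-a$, $\mathfrak{r}_{c,\eps}$, and $\lambda_0$ depends only on $\eps$, $c_m$, $c_M$, $b-a$ through the application of Claim~\ref{claim:1}. The equivalent formulation for $v_\lambda$ over $\Omega_{a,b}$ then follows from $v_\lambda(x',y)=\phi_k(x')u(y;\lambda,k)$ and the $L^2$-normalizations of $\phi_k$ and $u$ (see \eqref{equation-introduction:1bis}).
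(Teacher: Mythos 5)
Your argument is correct and follows essentially the same route as the paper: pass to the $t$-coordinate, use convexity/concavity of $w$ on each half-oscillation together with the minimal amplitude bound at the interior critical point to get $\int_{t_i}^{t_{i+1}}w^2\,dt\geq\tfrac13\mathfrak{r}_{c,\eps}^2(t_{i+1}-t_i)$, and then sum over the oscillations captured in $(a,b)$ via Claim~\ref{claim:1}. You have in fact supplied two details the paper leaves implicit or cites externally (the chord argument for the $\tfrac13$ constant, taken from \cite{BBD:5}, and the explicit covering of $[t(a),t(b)]$ by whole oscillation intervals), and both are handled correctly.
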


             \begin{proof}

 Note that the function $u(\ylambk)$ satisfies the equation
\be\label{equeigen}
[c(y)u'(\ylambk)]'=-(\lambda-c(y)\mu_k^2)u(\ylambk), \quad \lambda>(c_M+\eps)\mu_k^2.
\ee
Assume that 
    $$u(\ylambk)<0,\quad y\in (z_i,z_{i+1}).$$

We infer that $cu'$ is an increasing function in $(z_i,z_{i+1}).$ We change to a new coordinate $t$  as in ~\eqref{eqnewyt}, namely,
$$ \frac{dy}{dt}=c(y),\quad t(z_i)=0,\,\,t(z_{i+1})=t_{1}.$$
Let $w(t;\lambda,k)=u(\ylambk).$ Then $\frac{dw}{dt}$ is an increasing function in $(0,t_{1}).$
It follows that $w(t;\lambda,k)$ is convex in the interval. Let $z_{i+\half}$ be the (unique) zero of $u'(\ylambk)$ in the interval and denote $t_{\half}=t(z_{i+\half})$
\begin{equation}
\label{DF-ineq-intermedzone:5}
w(t_{\half};\lambda,k)^2=\max\limits_{0\leq t\leq t_1}w(t;\lambda,k)^2\geq \mathfrak{r}_{c,\eps}^2
\end{equation}
and as in the proof of ~\cite[Theorem 4.5]{BBD:5}
\be\label{eqwrc}
\int_0^{t_1}w(t;\lambda,k)^2dt\geq \frac13\mathfrak{r}_{c,\eps}^2(t_1-0),
\ee
hence
$$\int_{z_{i}}^{z_{i+1}}u(y;\lambda,k)^2dy\geq \frac{c_m}{3}\mathfrak{r}_{c,\eps}^2(t_1-0)\geq\frac{
c_m}{3c_M}\mathfrak{r}_{c,\eps}^2(z_{i+1}-z_i).$$

 This concludes the proof of the theorem.
 \end{proof}

\subsection{\textbf{MORE ON THE MINIMAL AMPLITUDE HYPOTHESIS}}\label{subsec-more-minim}$\empty$

Recall that the minimal amplitude was defined by ~\eqref{equation-definition-hypo-minam:1}:

\be\label{eqdefrceps} \mathfrak{r}_{c,\eps}^{2} = \inf_{\begin{array}{c} y\in \lbrack 0, H\rbrack,\\ (\mu_k,\lambda)\in\mathscr{A}^{c}_\eps\end{array}}\lbrack u_{\lambda,k}(y)^{2} + {(c(y)u'_{\lambda,k}(y))^{2}}\rbrack.\ee
We now consider this quantity in more detail.

The next proposition supplements Equation \eqref{DF-ineq-intermedzone:5}.
\begin{prop}
\label{prop-minam:0}
  Let $u(y;\lambda,k)$ be a normalized solution to \eqref{equation-introduction:1-ng} where $c(y)\in \mathscr{K}$ and $(\mu_{k},\lambda)\in\mathscr{A}^{c}_\eps.$ Let
\begin{equation}
\label{definition-minimal-amplitude:2}
\mathfrak{r}_{c,{\color{red}\eps},\mu_{k},\lambda}^{2}= \inf_{y\in \lbrack 0, H\rbrack}\lbrack u(y;\lambda,k)^{2} + (c(y)u'(y;\lambda,k))^{2}\rbrack.
\end{equation}
Then there exists a positive
$\tilde{\lambda}_0$ such that for any  $\lambda>\tilde{\lambda}_0,$
\begin{equation}
\label{definition-minimal-amplitude:3}
\mathfrak{r}_{c,\eps,\mu_{k},\lambda}^{2} =\min_{0\leq i < \mathfrak{s}} u(z_{i + \frac{1}{2}};\lambda,k)^{2}.
\end{equation}
\end{prop}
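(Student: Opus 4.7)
The strategy is to show that the ``Pr\"{u}fer energy''
\[
E(y):=u(y;\lambda,k)^{2}+\bigl(c(y)u'(y;\lambda,k)\bigr)^{2}
\]
attains its minimum on each sign-interval $(z_{i},z_{i+1})$ precisely at the internal extremum $z_{i+\frac{1}{2}}$, as soon as $\lambda$ is sufficiently large. Since $u'(z_{i+\frac{1}{2}};\lambda,k)=0$, this yields $E(z_{i+\frac{1}{2}})=u(z_{i+\frac{1}{2}};\lambda,k)^{2}$, so minimizing over $i$ will give \eqref{definition-minimal-amplitude:3}. As observed just before \eqref{equation-dt2wt}, both $u$ and $cu'$ belong to $W^{1,\infty}(0,H)$ (the regularity of $cu'$ comes from $(cu')'=-(\lambda-c\mu_{k}^{2})u\in L^{\infty}$), so $E$ is Lipschitz. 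Differentiating $E$ a.e. and eliminating $(cu')'$ via \eqref{equation-introduction:1-ng} gives
\[
E'(y)=2\,u(y)\,u'(y)\,\bigl[\,1-c(y)\bigl(\lambda-c(y)\mu_{k}^{2}\bigr)\bigr]\qquad\text{a.e.\ } y\in(0,H).
\]

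The choice of $\tilde\lambda_{0}$ is then dictated by the requirement that the bracket be uniformly negative. The membership $(\mu_{k},\lambda)\in\mathscr{A}^{c}_{\eps}$ gives $\mu_{k}^{2}\le\lambda/(c_{M}+\eps)$, hence $\lambda-c(y)\mu_{k}^{2}\ge\eps\lambda/(c_{M}+\eps)$ and therefore $c(y)(\lambda-c(y)\mu_{k}^{2})\ge c_{m}\eps\lambda/(c_{M}+\eps)$ for every $y\in[0,H]$. Setting
\[
\tilde\lambda_{0}:=\frac{c_{M}+\eps}{c_{m}\eps},
\]
any $\lambda>\tilde\lambda_{0}$ yields $1-c(y)(\lambda-c(y)\mu_{k}^{2})<0$ uniformly on $[0,H]$, uniformly in $k$ and in $c\in\mathscr{K}$.

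The remaining step is elementary sign bookkeeping on each subinterval $(z_{i},z_{i+1})$. Assume for definiteness that $u<0$ on the interval; then \eqref{equation-introduction:1-ng} gives $(cu')'=-(\lambda-c\mu_{k}^{2})u>0$, so $cu'$ is strictly increasing there. Being zero precisely at $z_{i+\frac{1}{2}}$, it is negative on $(z_{i},z_{i+\frac{1}{2}})$ and positive on $(z_{i+\frac{1}{2}},z_{i+1})$; since $c>0$ the same sign pattern holds for $u'$. Hence $uu'>0$ on the left half-interval and $uu'<0$ on the right (the case $u>0$ is symmetric). Combined with the previous paragraph, $E'<0$ on $(z_{i},z_{i+\frac{1}{2}})$ and $E'>0$ on $(z_{i+\frac{1}{2}},z_{i+1})$, so $E$ attains its minimum on $[z_{i},z_{i+1}]$ at $z_{i+\frac{1}{2}}$, with value $u(z_{i+\frac{1}{2}};\lambda,k)^{2}$. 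Taking the minimum over $0\le i<\mathfrak{s}$ gives \eqref{definition-minimal-amplitude:3}. The only delicate point is the rigorous differentiation of $E$ when $c$ is merely $L^{\infty}$; this is absorbed by the Lipschitz regularity of $u$ and $cu'$, which makes both the product rule and the substitution of the ODE legitimate a.e.
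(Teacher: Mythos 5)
Your proof is correct and follows essentially the same route as the paper's: both compute the derivative of the ``Pr\"ufer energy'' $E=u^{2}+(cu')^{2}$, use the ODE to reduce it to $2uu'$ times the bracket $1-c(y)(\lambda-c(y)\mu_{k}^{2})$, and choose $\tilde{\lambda}_{0}$ so that the bracket is negative, giving monotone decrease of $E$ on $(z_{i},z_{i+\frac12})$ and monotone increase on $(z_{i+\frac12},z_{i+1})$. The only superficial difference is that the paper first passes to the variable $t$ with $dy/dt=c(y)$ (so that $E=w^{2}+(w')^{2}$ with $w\in C^{1,1}$ and defers the choice of $\tilde{\lambda}_{0}$ to the proof of Claim~\ref{claim:1}), whereas you differentiate directly in $y$, justify this via the Lipschitz regularity of $u$ and $cu'$, and supply the explicit value $\tilde{\lambda}_{0}=(c_{M}+\eps)/(c_{m}\eps)$.
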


\begin{proof} We introduce again the change of variable  \eqref{eqnewyt} and suppose that $w(t;\lambda,k)=u(y(t);\lambda,k)$ is convex in the interval $(t_{i}, t_{i+1})$ as in the proof of Theorem ~\ref{thmcyirreg-minamp}.
 In light of Equation  ~\eqref{equation-dt2wt} the function $w(t;\lambda,k)$ satisfies
\begin{equation}
\label{equation-minimal-amplitude:1}
w''(t;\lambda,k) +c(y(t))\mu^{2}_{k}p(t;\lambda,k) w(t;\lambda,k) = 0,
\end{equation}
hence
\begin{equation}
\label{equation-minimal-amplitude:2}
\frac{d}{dt}\lbrack w(t;\lambda,k)^{2} + w'(t;\lambda,k)^{2}\rbrack = 2 \big(1-c(y(t))\mu^{2}_{k}p(t;\lambda,k)\big) w'(t;\lambda,k)w(t;\lambda,k).
\end{equation}
In the interval $(t_i,t_{i+1})$ we have $w(t;\lambda,k)<0$ and by convexity, with $z_{i + \frac{1}{2}} =y(t_{i+\frac{1}{2}}),$
\begin{equation}
\label{equation-minimal-amplitude:3}
w'(t;\lambda,k)\left\lbrace\begin{array}{l}
<0, t\in \lbrack t_{i}, t_{i+\frac{1}{2}}),\\
>0, t\in ( t_{i+\frac{1}{2}}, t_{i+1}\rbrack.
\end{array}\right.
\end{equation}
               As in the proof of  Claim ~\ref{claim:1}, we can now find  $\tilde{\lambda}_0$ so that $c(y)\mu^{2}_{k}p(y;\lambda,k)>1$ for $\lambda> \tilde{\lambda}_0.$
               Inserting this in ~\eqref{equation-minimal-amplitude:2} we obtain

\begin{equation*}
\frac{d}{dt}\Big[ w(t;\lambda,k)^{2} + w'(t;\lambda,k)^{2}\Big]\left\lbrace\begin{array}{l}
<0, t\in ( t_{i}, t_{i+\frac{1}{2}}),\\
>0, t\in ( t_{i+\frac{1}{2}}, t_{i+1}).
\end{array}\right.
\end{equation*}
As $u(y;\lambda,k)^{2} + (c(y)u'(y;\lambda,k))^{2} = w(t;\lambda,k)^2 + w'(t;\lambda,k)^2 $, Equation \eqref{definition-minimal-amplitude:3} clearly follows by considering all intervals. 
\end{proof}

As a corollary to the proof of Proposition ~\ref{prop-minam:0} we have
\begin{cor}
\label{cor-ng-minam-hyp:1}
If  $\lambda>\tilde{\lambda}_0$ then
for every $0\leq i < \mathfrak{s}$
\begin{equation}
\min \lbrack \vert c(z_{i}) u'(z_{i};\lambda,k)\vert^{2}, \vert c(z_{i+1})u'(z_{i+1};\lambda,k)\vert^{2}\rbrack\geq \vert u(z_{i+\frac{1}{2}};\lambda,k)\vert^{2}.
\end{equation}
\end{cor}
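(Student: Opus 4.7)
The plan is to read off the corollary directly from the monotonicity established inside the proof of Proposition~\ref{prop-minam:0}. Recall that in the coordinate $t$ defined by $dy/dt = c(y)$, the quantity $E(t) := w(t;\lambda,k)^2 + w'(t;\lambda,k)^2$ satisfies, for $\lambda > \tilde{\lambda}_0$,
\begin{equation*}
E'(t) < 0 \text{ on } (t_i, t_{i+1/2}), \qquad E'(t) > 0 \text{ on } (t_{i+1/2}, t_{i+1}),
\end{equation*}
and moreover $E(t) = u(y(t);\lambda,k)^2 + (c(y(t))u'(y(t);\lambda,k))^2$ since $dw/dt = c(y)u'(y)$.

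First I would evaluate $E$ at the three relevant points. At the endpoints $t_i, t_{i+1}$ we have $w = 0$ (since $u(z_i;\lambda,k) = u(z_{i+1};\lambda,k) = 0$), so
\begin{equation*}
E(t_i) = (c(z_i) u'(z_i;\lambda,k))^2, \qquad E(t_{i+1}) = (c(z_{i+1}) u'(z_{i+1};\lambda,k))^2.
\end{equation*}
At the interior critical point $t_{i+1/2}$ we have $w'(t_{i+1/2}) = c(z_{i+1/2}) u'(z_{i+1/2};\lambda,k) = 0$, so
\begin{equation*}
E(t_{i+1/2}) = u(z_{i+1/2};\lambda,k)^2.
\end{equation*}

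Next I would apply the monotonicity: since $E$ decreases on $[t_i, t_{i+1/2}]$,
\begin{equation*}
(c(z_i) u'(z_i;\lambda,k))^2 = E(t_i) \geq E(t_{i+1/2}) = u(z_{i+1/2};\lambda,k)^2,
\end{equation*}
and since $E$ increases on $[t_{i+1/2}, t_{i+1}]$,
\begin{equation*}
(c(z_{i+1}) u'(z_{i+1};\lambda,k))^2 = E(t_{i+1}) \geq E(t_{i+1/2}) = u(z_{i+1/2};\lambda,k)^2.
\end{equation*}
Taking the minimum of the two left-hand sides yields the claimed inequality.

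There is no genuine obstacle: the entire content is already contained in the sign analysis of $E'$ carried out for Proposition~\ref{prop-minam:0}; the only care required is in the case analysis on the sign of $w$ on the interval (the argument above assumed $w < 0$ on $(t_i, t_{i+1})$; if $w > 0$ there, one replaces $w$ by $-w$, which preserves $E$, and the same chain of (in)equalities applies).
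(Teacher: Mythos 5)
Your proof is correct and follows exactly the route the paper intends: the corollary is stated as "a corollary to the proof of Proposition~\ref{prop-minam:0}," and what the paper leaves implicit is precisely the evaluation of $E(t)=w^2+w'^2$ at the endpoints $t_i$, $t_{i+1}$ (where $w=0$) and at the critical point $t_{i+1/2}$ (where $w'=0$), combined with the sign analysis of $E'$ already carried out there. Your write-up supplies those evaluations and the resulting monotonicity comparison, with the minor but welcome addition of noting the $w>0$ case is handled by replacing $w$ with $-w$.
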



\subsection {\textbf{$c(y)$ LIPSCHITZ}} $\empty$\label{subsublip}

   The first case of coefficients to be considered in the following proposition is that of Lipschitz functions.

\begin{prop}
\label{prop-minam:1bis}
Let $c(y)\in \mathscr{K}$ be in $W^{1,\infty}(\lbrack 0, H\rbrack).$ Then it satisfies the minimal amplitude hypothesis  with respect to $\mathscr{A}^{c}_\eps.$
\end{prop}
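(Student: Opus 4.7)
The plan is to reduce the minimal amplitude hypothesis to a uniform lower bound on a suitable Pr\"ufer-type amplitude. First I would use the change of variable $dy/dt=c(y)$ already introduced in this section, which transforms \eqref{equation-introduction:1-ng} into
\begin{equation*}
w''(t)+\Phi(t)w(t)=0,\qquad \Phi(t):=c(y(t))\mu_k^{2}p(t;\lambda,k)=c(y(t))\bigl(\lambda-c(y(t))\mu_k^{2}\bigr).
\end{equation*}
For $(\mu_k,\lambda)\in\mathscr{A}^{c}_\eps$ the inequality $p\geq\eps$ gives $\Phi\geq c_m\eps\mu_k^{2}>0$ uniformly, and $c\in W^{1,\infty}$ makes $\Phi$ Lipschitz. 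I then introduce the modified Pr\"ufer variables $w=\tilde R\sin\psi$, $w'/\sqrt{\Phi}=\tilde R\cos\psi$, so that $\tilde R^{2}=w^{2}+(w')^{2}/\Phi$. A direct computation using $w''=-\Phi w$ yields the key identity
\begin{equation*}
\frac{d}{dt}\log\tilde R^{2}=-\frac{\Phi'(t)}{\Phi(t)}\cos^{2}\psi(t).
\end{equation*}

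The next step is a uniform bound on $\mathrm{TV}(\log\Phi)$. Splitting $\log\Phi=\log c+\log(\lambda-c\mu_k^{2})$ and using $c\geq c_m$ together with $p\geq\eps$,
\begin{equation*}
|\partial_y\log\Phi(y)|\;\leq\;\frac{|c'(y)|}{c_m}+\frac{|c'(y)|}{\eps},
\end{equation*}
and since the $y\leftrightarrow t$ change of variables preserves the total variation of $\log\Phi$, one obtains $\mathrm{TV}(\log\Phi)\leq(1/c_m+1/\eps)\|c'\|_{L^{1}(0,H)}=:C_{1}$, with $C_{1}$ independent of $(\mu_k,\lambda)\in\mathscr{A}^{c}_\eps$. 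Integrating the Pr\"ufer identity therefore gives $\tilde R^{2}_{\max}/\tilde R^{2}_{\min}\leq e^{C_{1}}$. The normalization $\|u\|_{L^{2}}=1$ yields
\begin{equation*}
\int_{0}^{\widetilde{H}}\tilde R^{2}\,dt\;\geq\;\int_{0}^{\widetilde{H}}w^{2}\,dt=\int_{0}^{H}\frac{u(y)^{2}}{c(y)}\,dy\;\geq\;\frac{1}{c_M},
\end{equation*}
so $\tilde R^{2}_{\max}\geq 1/(c_M\widetilde{H})$ and consequently $\tilde R^{2}_{\min}\geq e^{-C_{1}}/(c_M\widetilde{H})>0$ uniformly in $(\mu_k,\lambda)\in\mathscr{A}^{c}_\eps$.

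To finish, I invoke Proposition~\ref{prop-minam:0}: for $\lambda>\tilde\lambda_{0}$ the minimum of $u^{2}+(cu')^{2}$ is attained at the critical points $z_{i+\frac12}$ of $u$; but at such a point $w'=cu'=0$, so $\tilde R^{2}(t(z_{i+\frac12}))=u(z_{i+\frac12})^{2}$, whence $u(z_{i+\frac12})^{2}\geq e^{-C_{1}}/(c_M\widetilde{H})>0$. The remaining finitely many eigenvalues $\lambda\leq\tilde\lambda_{0}$ in $\mathscr{A}^{c}_\eps$ correspond to $C^{1}$ eigenfunctions (since $c\in W^{1,\infty}$), for each of which $u^{2}+(cu')^{2}>0$ on $[0,H]$ by ODE uniqueness, hence bounded below by continuity. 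Taking the minimum over this finite exceptional set yields $\mathfrak r_{c,\eps}>0$. The main obstacle is maintaining uniformity of $\mathrm{TV}(\log\Phi)$ across $\mathscr{A}^{c}_\eps$ in the regime $\lambda/\mu_k^{2}\to\infty$: the two logarithmic pieces must be controlled by different ingredients ($c\geq c_m$ and $p\geq\eps$), and this separation is what allows an estimate independent of $\lambda$ and $\mu_k$.
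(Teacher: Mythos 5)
Your argument is correct and follows essentially the same route as the paper: the change of variable $dy/dt=c$, the modified Pr\"ufer amplitude $\tilde R^2=w^2+(w')^2/\Phi$ (identical to the paper's $w_1^2+w_2^2$ with $w_2=w'/\alpha$, $\Phi=\alpha^2$), the derivative identity $\frac{d}{dt}\log\tilde R^2=-(\Phi'/\Phi)\cos^2\psi$, the $\lambda$-uniform bound on $\Phi'/\Phi$ obtained by splitting off $\log c$ and $\log(\lambda-c\mu_k^2)$, and the normalization step. The only notable deviation is that you replace the paper's $\sup_y|c'/c|$ bound by a slightly sharper $L^1$ bound on $c'$, and you take an unnecessary detour through Proposition~\ref{prop-minam:0} and an exceptional finite set of small eigenvalues, whereas the uniform lower bound on $\tilde R^2$ already yields $u^2+(cu')^2\geq\min(1,\varepsilon c_m\mu_1^2)\,\tilde R^2_{\min}$ directly via~\eqref{ineq-intermedzone:10}, without any threshold $\tilde\lambda_0$.
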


\begin{proof}
Let $u(\ylambk)$ be a normalized solution to ~\eqref{equation-introduction:1-ng}.
 We just need to prove the estimate ~\eqref{eqrcgreat0}.  We apply the variable change ~\eqref{eqnewyt}. Equation \eqref{equation-introduction:1-ng} can be rewritten as in ~\eqref{equation-minimal-amplitude:1}
 \begin{equation}
\label{eq-intermedzone:4}
w''(t;\lambda,k) +  \alpha^2(t;\lambda,k)w(t;\lambda,k)=0,
\end{equation}
where
 \begin{equation}
\label{eq-intermedzone:5}
\alpha(t;\lambda,k) =\mu_k\sqrt{c(y(t)) p(y(t))} = \mu_k\sqrt{c(y(t)) \frac{\lambda}{\mu^2_{k}}-c^2(y(t))}.
\end{equation}
Observe that in light of ~\eqref{ineq-intermedzone:2}
 \begin{equation}
\label{ineq-intermedzone:10}
\sqrt{\eps c_m}\mu_{k} \leq\alpha(t;\lambda,k).
\end{equation}
\noindent
We now replace $w,\,w'$ by $w_1,\,w_2$ as follows (suggested in the  recent paper ~\cite{ABM:1}).
\begin{equation*}
w_{1}(t;\lambda,k) = w(t;\lambda,k), \quad w_{2}(t;\lambda,k) = \frac{w'(t;\lambda,k)}{\alpha(t;\lambda,k)},
\end{equation*}
and note that the vector function $U(t;\lambda,k)  = \left(\begin{array}{c}
w_{1}(t;\lambda,k) \\
w_{2}(t;\lambda,k)
\end{array}\right)$
 satisfies
\begin{equation}
\label{eq-intermedzone:6}
U'(t;\lambda,k) = \left(\begin{array}{cc}
0 & \alpha(t;\lambda,k)\\
-\alpha(t;\lambda,k) & 0
\end{array}\right)U(t;\lambda,k)  + \left(\begin{array}{cc}0&0\\0&-\frac {\alpha'(t;\lambda,k)}{\alpha (t;\lambda,k)}
 \end{array}\right)U(t;\lambda,k).
\end{equation}
It readily follows that
\begin{equation}
\label{eq-intermedzone:6a}
\frac{d}{dt}(w_{1}^{2} + w_{2}^{2}) = - 2\frac {\alpha't;\lambda,k)}{\alpha (t;\lambda,k)} w_{2}^{2}.
\end{equation}

  Now
  $$\frac{\alpha'}{\alpha}=\frac{1}{2cp}(cp)',$$
  hence
  
  $$\frac{\alpha'}{\alpha}=
  \frac12\, \left(1-\frac{c(y(t))\mu_k^2}{\lambda -c(y(t))\mu_k^2}\right)\frac{c'(y(t))}{c(y(t))}.$$
  Since $\mu_k^2c(y)\leq \mu_k^2c_M$ and $\lambda -c(y(t))\mu_k^2\geq \eps \mu_k^2$ 
   it follows that
  \begin{equation*}\label{eqalpha'alpha}
  \Big|\frac{\alpha'}{\alpha}\Big|\leq \frac{1}{2}\left(1 + \frac{c_M}{\eps}\right)\,\sup\limits_{y\in [0,H]}\,\frac{|c'(y)|}{c(y)},
  \end{equation*}

  that implies
  \begin{equation}
\label{ineq-intermedzone:11}
-C (w_{1}^{2} + w_{2}^{2})\leq \frac{d}{dy}(w_{1}^{2} + w_{2}^{2})\leq C (w_{1}^{2} + w_{2}^{2}),
\quad C=\left(1 + \frac{c_M}{\eps}\right)\,\sup\limits_{y\in [0,H]}\,\frac{|c'(y)|}{c(y)}.
\end{equation}

 Since $w_{1}^{2} + w_{2}^{2}>0$ in the interval $\lbrack 0, \tilde{H}\rbrack$ we conclude that there exists a constant $R>0,$ so that for all $(\mu_k,\lambda)\in \mathscr{A}^{c}_\eps$ and  for any $t_{1},t_{2}\in \lbrack 0, \tilde{H}\rbrack$
  \begin{equation}
\label{ineq-intermedzone:12}
(w_{1}^{2} + w_{2}^{2})(t_{2})\leq R (w_{1}^{2} + w_{2}^{2})(t_{1}).
\end{equation}
Furthermore, from the normalization of $u(\ylambk)$, i.e. $1 = \int_{0}^{H}u(\ylambk)^{2}dy = \int_0^{\tilde{H}} w_1^2(t)c(y(t))\,dt,$ we have $1\leq c_M \int_0^{\tilde{H}} (w_1^2 + w_2^2)(t) dt\leq c_M R \tilde{H}(w_1^2 + w_2^2)(t_1), \forall t_1\in (0,\tilde{H}).$ It follows that there exists a constant $\eta>0,$ so that for all $(\mu_k,\lambda)\in \mathscr{A}^{c}_\eps,$

  \begin{equation}
\label{ineq-intermedzone:13}
(w_{1}^{2} + w_{2}^{2})(t)\geq \eta, \quad t\in \lbrack 0, \tilde{H}\rbrack.
\end{equation}
This estimate, combined with the definition of $w_{1}, w_{2}$ and \eqref{ineq-intermedzone:10} implies the required estimate \eqref{eqrcgreat0} and concludes the proof of the proposition.
\end{proof}
\noindent
\noindent

\subsection{\textbf{ $c(y)$ PIECEWISE CONSTANT}}\label{subsecpiececonst}
$$ $$

 We turn next to the case that $c(y)$ is a piecewise constant function. In  Theorem ~\ref{prop-minam:1-new} below we discuss our most general case,  namely  $c(y)$ of bounded variation. To this end, a detailed treatment of the piecewise constant case is needed.

     We shall use the following notation. There exist
 $0=h_{-1}<h_0<h_1<\ldots<h_N=H,$
           and positive constants $c_0,c_1,\ldots,c_N$ so that
           \be\label{eqhjcj-a}
            c(y)=c_{j+1},\,\,\,y\in(h_j,h_{j+1}),\,\,j=-1,0,\ldots,N-1.
           \ee
         We  show that $c(y)$ satisfies the minimal amplitude hypothesis, where the relevant constants depend only \textit{on its total variation}.

         \textbf{Notational comment:} In order to keep the notational uniformity with the other sections, we retain the notation $c_m,\,c_M$ for the minimal and maximal values, respectively, of $c(y).$ Of course they coincide with some $c_j's$ but the distinction in  various estimates  (such as ~\eqref{eqminampconst} below) will be completely clear.

        Recall that $u(\ylambk)$ satisfies Equation  ~\eqref{equation-introduction:1-ng} with
                  $p(\ylambk)= \frac{\lambda}{\mu^2_{k}}- c(y),$ so that

                            \be\label{eqhjpj-new}
            p(\ylambk)=p_{j+1}=\frac{\lambda}{\mu^2_{k}}-c_{j+1},\,\,\,y\in(h_j,h_{j+1}),\,\,
            j=-1,0,\ldots,N-1.
           \ee

         \begin{prop}\label{propcyconst}  Assume  that
          $c(y)$ is piecewise constant as above, and let
            $$V=\suml_{j=0}^{N-1}|c_{j+1}-c_j|$$
            be the total variation of $c(y).$

          Let $u(\ylambk)$ be a normalized solution to ~\eqref{equation-introduction:1-ng}
          where $(\mu_k,\,\lambda)\in \mathscr{A}^{c}_\eps.$

          Then
          \begin{enumerate}
          \item $c(y)$ satisfies the minimal amplitude hypothesis with respect to $\mathscr{A}^{c}_\eps.$

          \item For every $(a,b)\subseteq[0,H]$ there exist  constants $d>0,\,\,$
           $\lambda_0>0,\,$ depending only on $b-a,\,c_m,\,c_M,\,\eps, \,V,$ such that

         \be\label{equy2consta,b}
          \int_a^bu(\ylambk)^2\, dy>d,\quad \lambda>\lambda_0.
                  \ee
         \textbf{Note in particular that $d$ and $\lambda_0$ }do not depend on the size $N$ of the partition.
         \end{enumerate}

         \end{prop}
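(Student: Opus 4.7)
The plan splits into two parts, with part (2) reduced to part (1) via Theorem~\ref{thmcyirreg-minamp}: once we show $\mathfrak{r}_{c,\eps}$ is bounded below in terms of $c_m,c_M,\eps,V$ (and the fixed data $H,\mu_1$), the constants in \eqref{equyneq0} inherit exactly this dependence.

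On each piece $I_j=(h_{j-1},h_j)$ the equation \eqref{equation-introduction:1-ng} is a constant-coefficient harmonic oscillator, and the solution reads $u(y)=a_j\cos(k_j(y-h_{j-1}))+b_j\sin(k_j(y-h_{j-1}))$ with $k_j^2=\lambda/c_j-\mu_k^2\ge\eps>0$ (guaranteed by $(\mu_k,\lambda)\in \mathscr{A}^c_\eps$). The local squared amplitude $A_j^2:=u^2+(u')^2/k_j^2=a_j^2+b_j^2$ is constant inside $I_j$. The first key step is to use the transmission conditions (continuity of $u$ and of $cu'$) at each interface $h_j$ to derive, with $\alpha_j:=c_jk_j$,
\begin{equation*}
A_{j+1}^2 \;=\; (\alpha_j/\alpha_{j+1})^2 A_j^2 \;+\; u(h_j)^2\bigl[1-(\alpha_j/\alpha_{j+1})^2\bigr].
\end{equation*}
Since $0\le u(h_j)^2\le A_j^2$, a short case analysis (separately for $\alpha_{j+1}\gtrless\alpha_j$) yields the clean bound
\begin{equation*}
\bigl|\log(A_{j+1}^2/A_j^2)\bigr|\le \bigl|\log(\alpha_{j+1}^2/\alpha_j^2)\bigr|.
\end{equation*}

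The second step converts the right-hand side into a total-variation estimate. Since $\alpha^2=c(\lambda-c\mu_k^2)$, one computes $|d\log\alpha^2/dc|=|1/c-\mu_k^2/(\lambda-c\mu_k^2)|\le 1/c_m+1/\eps=:K$, uniformly in $(\mu_k,\lambda)\in \mathscr{A}^c_\eps$ (using $\lambda-c\mu_k^2\ge\eps\mu_k^2$). Telescoping and the mean value theorem then give $|\log(A_i^2/A_j^2)|\le KV$ for all $i,j$. Combined with the normalization $1=\int_0^H u^2\,dy\le A_{\max}^2 H$, this yields $A_{\min}^2\ge e^{-KV}/H$, uniformly in $N$. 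To finish part (1), the Pr\"ufer identity inside each piece gives $u^2+(cu')^2=A_j^2[\sin^2\phi+\alpha_j^2\cos^2\phi]\ge A_j^2\min(1,\alpha_j^2)$, and the lower bound $\alpha_j^2\ge c_m\eps\mu_k^2\ge c_m\eps\mu_1^2$ (with $\mu_1$ the lowest eigenvalue of $-\Delta_{x'}$) produces
\begin{equation*}
\mathfrak{r}_{c,\eps}^2 \;\ge\; \min(1,c_m\eps\mu_1^2)\,\frac{e^{-KV}}{H}\;>\;0,
\end{equation*}
depending only on $c_m,c_M,\eps,V$. Part (2) then follows by invoking Theorem~\ref{thmcyirreg-minamp}, since in its proof both $d$ and $\lambda_0$ are built out of $\mathfrak{r}_{c,\eps}$ and the data $c_m,c_M,\eps,b-a$.

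The principal obstacle is the transmission analysis at the interfaces: a naive multiplicative bound on $A_{j+1}^2/A_j^2$ across $h_j$ gives factors that could blow up with $N$. The clean logarithmic inequality $|\log(A_{j+1}^2/A_j^2)|\le|\log(\alpha_{j+1}^2/\alpha_j^2)|$ is what turns these potentially exploding products into a telescoping sum controlled by the total variation. The second delicate point is the Lipschitz bound for $\log\alpha^2$ as a function of $c$: its independence from $(\mu_k,\lambda)$ is exactly what allows the whole estimate to survive the high-frequency limit and to remain uniform over the family $\mathscr{K}_V$.
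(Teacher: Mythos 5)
Your proof is correct and follows essentially the same route as the paper: explicit sinusoidal solutions on each constant piece, transmission relations at the interfaces (your identity $A_{j+1}^2=rA_j^2+u(h_j)^2(1-r)$ with $r=(\alpha_j/\alpha_{j+1})^2$ is the paper's ~\eqref{eqbetaj2} rewritten), and a telescoping estimate showing the amplitude ratio grows at most like $e^{\kappa V}$, which combined with the $L^2$-normalization and the bound $\alpha_j^2\ge c_m\eps\mu_1^2$ gives the minimal amplitude hypothesis and then non-concentration via Theorem~\ref{thmcyirreg-minamp}. Your logarithmic inequality $|\log(A_{j+1}^2/A_j^2)|\le|\log(\alpha_{j+1}^2/\alpha_j^2)|$ together with the mean-value bound on $\log\alpha^2(c)$ is a slightly cleaner packaging of the paper's direct estimate $\bigl|\tfrac{p_{j+1}c_{j+1}}{p_{j+2}c_{j+2}}-1\bigr|\le\kappa|c_{j+1}-c_{j+2}|$ in ~\eqref{eqestpj}, but it is doing the same job.
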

         \begin{proof}
               In light of Theorem ~\ref{thmcyirreg-minamp} the estimate ~\eqref{equy2consta,b} follows from  the minimal amplitude property.

          Consider an interval $I_j=(h_j,h_{j+1}).$

         The solution $u(\ylambk)$ to ~\eqref{equation-introduction:1-ng} in $I_j$ is given by
         \be\label{equyinIj-a}
         u(\ylambk)=\beta_j\sin\left(\mu_k\sqrt{\frac{p_{j+1}}{c_{j+1}}}(y-\gamma_j)\right),\quad y\in I_j
         \ee
         where $\beta_j,\,\gamma_j$ are suitable constants. Recall from ~\eqref{ineq-intermedzone:2} that $|p_{j+1}|\geq\varepsilon,\,j=-1,0,\ldots ,N-1.$

        In the interval $I_j$ we have
        \be\label{equyinIjbeta}
        u(\ylambk)^2+ c(y)^2 u'(\ylambk)^2\geq\beta_j^2\min(1, c_m\mu_k^2p_{j+1})\geq\beta_j^2\min(1, c_m \mu_k^2\varepsilon).
        \ee
         As $\mu_{k}\geq \mu_{1}>0$, to complete the proof we need to show the existence of a constant $\delta>0,$ depending only on $\eps,\,c_m,\,c_M,\,V$ so that
        \be\label{equyinIj-b}
        |\beta_j|\geq \delta,\quad  j=-1,0,\ldots,N-1.
        \ee
         We observe the following facts concerning the coefficients $\set{\beta_j}_{j=-1}^{N-1}.$
         \begin{itemize}
         \item $$\beta_j \neq 0, \quad j=-1,0,\ldots,N-1,$$
           since otherwise $u(\ylambk)\equiv 0.$
         \item There exists a constant $\kappa>1,$ depending only on
         $c_m,\,c_M,\,\eps, \,V,$  such that
            \be\label{eqsumbeta}\frac{\beta_j^2}{\beta_{j+1}^2},\frac{\beta_{j+1}^2}{\beta_j^2}
            \leq (1 +\kappa |c_{j+2}-c_{j+1}|),\quad j=-1,0,\ldots,N-2.\ee
                     \end{itemize}
   To establish ~\eqref{eqsumbeta} we proceed as follows.

       Denote, for $j=-1,0,\ldots,N-2$

         $$A_{j}=\mu_k\sqrt{\frac{p_{j+1}}{c_{j+1}}}(h_{j+1}-\gamma_j),\,\,\,
         B_{j+1}=\mu_k\sqrt{\frac{p_{j+2}}{c_{j+2}}}(h_{j+1}-\gamma_{j+1}).$$

         The continuity of $u(\ylambk)$ and $c(y)u'(\ylambk)$ at $h_{j+1}$ implies that
         \be\label{eqcontumhj}\aligned
         \beta_j\sin(A_{j})=
         \beta_{j+1}\sin(B_{j+1}),\\
         c_{j+1}\beta_j\mu_k\sqrt{\frac{p_{j+1}}{c_{j+1}}}\cos(A_{j})=
         c_{j+2}\beta_{j+1}\mu_k\sqrt{\frac{p_{j+2}}{c_{j+2}}}\cos(B_{j+1}).
         \endaligned\ee
         Recall that $\beta_j\neq 0$ for all $j.$
          It follows that for $ j=-1,0,\ldots,N-2$
          \be\label{eqbetaj2}\aligned
          \beta_j^2=\beta_{j+1}^2\Big(1+\big[\frac{p_{j+2}}{p_{j+1}} \frac{c_{j+2}}{c_{j+1}}-1\big]\cos^2(B_{j+1})\Big),\\
          \beta_{j+1}^2=\beta_{j}^2\Big(1+\big[\frac{p_{j+1}}{p_{j+2}} \frac{c_{j+1}}{ c_{j+2}}-1\big]\cos^2(A_{j})\Big).
          \endaligned\ee
          From ~\eqref{eqhjpj-new} it readily follows that
           \begin{equation}\label{eqpj+1pj}\begin{array}{ll}\begin{array}{lll}
           \frac{p_{j+2}}{p_{j+1}}  \frac{c_{j+2}}{c_{j+1}}-1 &= &\frac{\lambda - \mu_k^2 (c_{j+2} + c_{j+1})}{\lambda-\mu_k^2c_{j+1}}
           \cdot\frac{c_{j+2}-c_{j+1}}{c_{j+1}},\\ 
           \frac{p_{j+1}}{p_{j+2}}\frac{c_{j+1}}{c_{j+2}}-1&=&\frac{\lambda - \mu_k^2 (c_{j+2} + c_{j+1})}
           {\lambda-\mu^2_k c_{j+2}}
           \cdot\frac{c_{j+1}-c_{j+2}}{c_{j+2}},
           \end{array}
            \quad j=-1,0,1,\ldots, N-2.
            \end{array}
           \end{equation}

          The fact that $(\mu_k,\lambda)\in \mathscr{A}^{c}_\eps$ entails
        \be\label{eqestpj}  |\frac{p_{j+1}}{p_{j+2}} \frac{c_{j+1}}{c_{j+2}}-1| , \,\,|\frac{p_{j+2}}{p_{j+1}} \frac{c_{j+2}}{c_{j+1}}-1|\leq
        \kappa |c_{j+1}-c_{j+2}|, \,\,j=-1,0,\ldots,N-2,
                        \ee
                        where $\kappa>0$ depends only on $c_m,\,c_M,\,\eps.$\footnote{In fact, $\kappa = \frac{1}{c_m}\max(1,\frac{c_M -\varepsilon}{\varepsilon})$ works. The delicate point arrives when $c_M < c_{j+2}+c_{j+1}.$ We can eliminate  it since the function $\lambda \to \frac{\lambda - \mu_k^2 (c_{j+2} + c_{j+1})}{\lambda-\mu_k^2c_{j+1}}$ is increasing for $\lambda\geq (c_M +\varepsilon)\mu_k^2.$}
                        In conjunction with ~\eqref{eqbetaj2} the estimate ~\eqref{eqsumbeta} is established.

          From ~\eqref{eqbetaj2} we deduce for any $q\in\set{0,1,\ldots,N-1}$ upper and lower estimates
          $$\begin{cases}\beta_q^2\leq \beta_{-1}^2\prod\limits_{j= 0}^{q}(1+\kappa|c_{j+1}-c_{j}|)\leq \beta_{-1}^2e^{\kappa\suml\limits_{j=0}^{q}|c_{j+1}-c_{j}|},\\
          \beta_q^2\geq\beta_{-1}^2\prod\limits_{j=0}^{q}(1+\kappa|c_{j+1}-c_{j}|)^{-1}\geq \beta_{-1}^2e^{-\kappa\suml\limits_{j=0}^{q}|c_{j+1}-c_{j}|}.
          \end{cases}$$
          Thus, for all $q\in\set{0,1,\ldots,N-1}$, the coefficients $\beta_{-1}$ and $\beta_q$ are comparable in the sense that
          \be\label{eqestbetal}\beta_{-1}^2e^{-\kappa V}\leq\beta_q^2\leq\beta_{-1}^2e^{\kappa V}.\ee

          The normalization of $u(\ylambk)$ in conjunction with \eqref{equyinIj-a} ,~\eqref{eqestbetal} implies
          \be\label{equnormbeta1}1=\int_0^Hu(\ylambk)^2\, dy\leq \suml_{j=-1}^{N-1}\beta_j^2(h_{j+1}-h_j)\leq H\beta_{-1}^2e^{\kappa V}.\ee
          Thus finally the estimate ~\eqref{equyinIj-b} follows from ~\eqref{eqestbetal} and ~\eqref{equnormbeta1}.

          The estimates ~\eqref{equyinIjbeta} and ~\eqref{equyinIj-b} imply that the minimal amplitude hypothesis is satisfied
          \be\label{eqminampconst}
       \mathfrak{r}_{c,\eps}^2= \inf_{\begin{array}{c} y\in \lbrack 0, H\rbrack,\\ (\mu_k,\lambda)\in\mathscr{A}^{c}_\eps\end{array}} \set{u(\ylambk)^2+ (c(y)u'(\ylambk))^2\,\,
      }
       \geq\delta^2\min(1,c_m\mu_1^2\varepsilon)>0,
        \ee
        and $\mathfrak{r}_{c,\eps}$ depends only on $\eps,\,c_m,\,c_M,\,V.$

        The non-concentration estimate ~\eqref{equy2consta,b} is now a consequence of the general Theorem ~\ref{thmcyirreg-minamp}.
         \end{proof}

         We deduce a result for all piecewise constant coefficients having a uniform bound of their total variations.
\begin{cor}\label{corunifcV} Let $\mathscr{K}_{PCV}\subseteq\mathscr{K}$ be the set of all piecewise constant diffusion coefficients, with total variation less than $V.$ Then
\begin{enumerate}
\item \be\label{eqinfrcKV}
\mathfrak{r}_{PCV,\eps}=\inf\limits_{c(y)\in \mathscr{K}_{PCV}}\mathfrak{r}_{c,\eps} >0.
\ee
\item The ratios of the amplitudes of any two waves (between consecutive zeros) are uniformly bounded, for all coefficients in $\mathscr{K}_{PCV}.$
\end{enumerate}
\end{cor}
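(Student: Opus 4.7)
The plan is to observe that Proposition \ref{propcyconst} actually produces \emph{quantitative} bounds whose parameters depend only on $\eps,\,c_m,\,c_M,\,V,\,H$ and on the fixed first eigenvalue $\mu_1$ of $-\Delta_{x'}$ on $\Omega'$; so the corollary is essentially obtained by re-reading that proof with uniformity in mind. I would first recall the chain of constants:   the constant $\kappa$ arising in \eqref{eqestpj} depends only on $c_m, c_M, \eps$ (as noted in the footnote); the propagation estimate \eqref{eqestbetal} gives $\beta_q^2\in [\beta_{-1}^2 e^{-\kappa V},\beta_{-1}^2 e^{\kappa V}]$ for every $q$, an inequality whose constants involve only $\kappa$ and $V$; the normalization \eqref{equnormbeta1} yields $\beta_{-1}^2\geq H^{-1}e^{-\kappa V}$; and thus the lower bound \eqref{eqminampconst},
\[
\mathfrak{r}_{c,\eps}^2\ \geq\ \delta^2\,\min(1,\,c_m\mu_1^2\eps),\qquad \delta^2:=H^{-1}e^{-2\kappa V},
\]
depends only on the data listed above. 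Since the partition $\{h_j\}$ and the number $N$ of jumps disappear from this bound, taking the infimum over $c\in\mathscr{K}_{PCV}$ preserves it and yields $\mathfrak{r}_{PCV,\eps}>0$, which is (1).

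For (2), between two consecutive zeros $z_i<z_{i+1}$ of $u(\cdot;\lambda,k)$ the maximum $|u(z_{i+\frac12};\lambda,k)|$ is by \eqref{equyinIj-a} either equal to some $|\beta_j|$ (if the arch lies in a single piece $I_j$), or equal to $\max_j |\beta_j|$ where $j$ ranges over the indices of those $I_j$ intersecting $(z_i,z_{i+1})$. Combining the upper and lower halves of \eqref{eqestbetal} one gets, for any two indices $p,q\in\{-1,0,\ldots,N-1\}$,
\[
\frac{\beta_p^2}{\beta_q^2}\ \leq\ e^{2\kappa V},
\]
and this bound is independent of the particular $c\in\mathscr{K}_{PCV}$. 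Passing from the piecewise amplitudes $\beta_j$ to the wave amplitudes between consecutive zeros amounts to bounding ratios of maxima over subcollections of $\{|\beta_j|\}$, which is trivially controlled by the same constant $e^{2\kappa V}$.

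There is no real obstacle here; the only care needed is the observation that the estimate in \eqref{eqminampconst} and the exponential propagation inequality \eqref{eqestbetal} are genuinely independent of the partition $\{h_j\}$, so that the bound survives the supremum over $\mathscr{K}_{PCV}$. This is the crux of Remark \ref{rem-uniform}: the minimal amplitude and the amplitude-ratio bounds are stable under arbitrary refinement of the jump set as long as the total variation budget $V$ is preserved, which is exactly what we need to extend the argument later to general $c$ of bounded variation via piecewise constant approximation.
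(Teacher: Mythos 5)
Your proposal is correct and takes essentially the same approach as the paper, which simply cites \eqref{eqminampconst} for item (1) and \eqref{eqestbetal} for item (2); you have merely made explicit the (important) observation that the constants $\kappa$ and $\delta$ in Proposition~\ref{propcyconst} depend only on $\eps,\,c_m,\,c_M,\,V$ (and $H,\,\mu_1$) and not on the partition size $N$ or the jump locations. One small imprecision in item (2): the amplitude of the arch between $z_i$ and $z_{i+1}$ equals $|\beta_{j^*}|$ for the single interval $I_{j^*}$ containing the extremum point $z_{i+\frac12}$ (where $\cos$ vanishes in \eqref{equyinIj-a}), not the maximum of $|\beta_j|$ over all intervals intersecting the arch; but this does not affect the conclusion, since the uniform bound $e^{2\kappa V}$ on ratios $\beta_p^2/\beta_q^2$ covers both.
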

\begin{proof} The estimate ~\eqref{eqinfrcKV} follows from ~\eqref{eqminampconst}. The second item follows from ~\eqref{eqestbetal}.
\end{proof}



\subsection{\textbf{THE ULTIMATE CASE- $c(y)$ OF BOUNDED VARIATION}}\label{subsecbTV} $\empty$\\

   Our ultimate result concerns the case that $$c(y)\in \mathscr{K}_V=\set{c\in\mathscr{K},\,\,TV(c)\leq V}.$$ 
   Recall that $\mathscr{K}$ was defined in ~\eqref{eq-def-K}.
   
   We establish non concentration  for spectral pairs $(\mu_k,\lambda)\in\mathscr{A}^{c}_\eps$
   (see ~\eqref{eqAceps}).

   As in the cases  above, the proof relies on the validity of the minimal amplitude property, via the fundamental  Theorem ~\ref{thmcyirreg-minamp}.

 \begin{thm}
\label{prop-minam:1-new}
 Let  $c(y)$ be of bounded variation. Then it satisfies the  minimal amplitude hypothesis with respect to $\mathscr{A}^{c}_\eps,$ uniformly for all $\mathscr{K}_V=\set{c(y)\in\mathscr{K},\,\,TV(c)\leq V}.$

   More precisely, as in  ~\eqref{equation-definition-hypo-minam:1},
   
   \begin{itemize}
   \item
  \begin{equation}
 \label{equation-definition-cybv}
 \mathfrak{r}_{c,\eps}^{2} = \inf_{\begin{array}{c} y\in \lbrack 0, H\rbrack,\\ (\mu_k,\lambda)\in\mathscr{A}^{c}_\eps\end{array}}\lbrack u_{\lambda,k}(y)^{2} + (c(y)u'_{\lambda,k}(y))^{2}\rbrack >0,
 \end{equation}
 and
 \begin{equation}
 \label{equation-definition-cybv-unif}
\mathfrak{r}_{V,\eps}^{2}:=\inf\limits_{c\in\mathscr{K}_V} \mathfrak{r}_{c,\eps}^{2} >0.
 \end{equation}
 \item In particular
    \be\label{eqnonconcgeneral-1}
    \inf_{\begin{array}{c} c(y)\in\mathscr{K}_V,\\ (\mu_k,\lambda)\in\mathscr{A}^{c}_\eps\end{array}}\int_a^bu(\ylambk)^2\, dy>0.
    \ee
 \item For the asymptotic behavior of the eigenfunctions, as $\lambda\to\infty,$ we have the following statement concerning the lower bound.\\
    For any $(a,b)\subseteq (0,H)$ there exist a constant $\lambda_0$ depending only on $\eps,\,c_m,\,c_M,\,b-a,\,\mathfrak{r}_{V,\eps}$ and a constant $\mathfrak{f}>0$ depending only on $\eps,\,c_m,\,c_M,\,b-a$ such that, for every $c(y)\in\mathscr{K}_V,$ and  for every normalized $u(\ylambk)$ associated with $(\mu_k,\lambda)\in \mathscr{A}^{c}_\eps,$
    \be\label{eqnonconcgeneral}
    \int_a^bu(\ylambk)^2\, dy\geq \mathfrak{f},\quad \lambda>\lambda_0,\,\,c(y)\in\mathscr{K}_V.
    \ee

     \item 
    Finally, let $\omega=\omega'\times(a,b).$ If $\omega'\neq\Omega'$ assume that the family $\set{\phi_k(x')}_{k=1}^\infty$ of eigenfunctions of the Laplacian in $\Omega'\subseteq\RR^d$ does not concentrate in $\Omega'\setminus\omega'.$
             \\
             Then there exists a constant $\mathfrak{g}>0$ such that any eigenfunction $v_{\lambda}(x',y)=u(\ylambk)\phi_k(x')$ satisfies
      \be\label{eqnoncontrateab-1}
      \mathfrak{g}\leq\frac{\Vert v_{\lambda}\Vert_{L^{2}(\omega)}}{\Vert v_{\lambda}\Vert_{L^{2}(\Omega)}}\leq 1
      \ee
      uniformly for all $c(y)\in\mathscr{K}_V$ and all eigenvalues in $ \mathscr{A}^{c}_\eps.$
      \end{itemize}
      
 \end{thm}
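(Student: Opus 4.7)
The plan is to reduce the BV case to the piecewise constant case of Proposition~\ref{propcyconst}, exploiting the crucial observation recorded in Corollary~\ref{corunifcV} that the minimal amplitude bound there depends only on $\varepsilon,\,c_m,\,c_M$ and the total variation $V,$ and not on the number $N$ of subintervals. Given $c\in\mathscr{K}_V,$ I would first construct piecewise constant approximants $c_n$ with $c_m\leq c_n\leq c_M,$ $TV(c_n)\leq TV(c)\leq V,$ and $c_n\to c$ in $L^1(0,H)$ (hence a.e.\ along a subsequence); this is standard for BV functions via averaging over refining partitions, with the standard lower-semicontinuity of $TV$ ensuring the variation bound is preserved. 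Thus $c_n\in\mathscr{K}_{PCV}\cap\mathscr{K}_V$ and Corollary~\ref{corunifcV} yields $\mathfrak{r}_{c_n,\varepsilon/2}\geq \mathfrak{r}_{PCV,\varepsilon/2}=:\rho>0,$ with $\rho$ depending only on $c_m,\,c_M,\,\varepsilon,\,V.$ The slight shift $\varepsilon\mapsto\varepsilon/2$ is needed only to absorb perturbations of the threshold $c_{M,n}$ relative to $c_M$ when choosing approximating pairs $(\mu_k,\lambda_n)\in\mathscr{A}^{c_n}_{\varepsilon/2}.$

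Next, for each fixed $(k,\ell),$ the quadratic forms $a_{k,n}(u,u)=\int_0^H c_n(|u'|^2+\mu_k^2|u|^2)\,dy$ converge by dominated convergence to $a_k(u,u)$ on $H^1_0(0,H),$ so $A_k^{c_n}\to A_k^c$ in strong resolvent sense. Since $A_k^c$ has compact resolvent, the $\ell$-th eigenvalues satisfy $\beta^n_{k,\ell}\to\beta_{k,\ell}$ and the associated normalized eigenfunctions $u_n$ converge to $u$ in $L^2(0,H).$ Uniform convergence $u_n\to u$ and $c_nu_n'\to cu'$ on $[0,H]$ is then recovered from the integrated first-order identity $(c_nu_n')(y)=(c_nu_n')(0)+\int_0^y(c_n\mu_k^2-\beta^n_{k,\ell})u_n\,d\sigma,$ via the uniform $H^1$-bound on $u_n$ and Sobolev embedding in one dimension. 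Passing to the limit in $u_n(y)^2+(c_n(y)u_n'(y))^2\geq\rho^2$ gives $\mathfrak{r}_{c,\varepsilon}\geq\rho$ uniformly in $c\in\mathscr{K}_V,$ which is exactly \eqref{equation-definition-cybv}--\eqref{equation-definition-cybv-unif}.

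The main technical obstacle, in my view, lies in the passage to the limit in $c_nu_n'$ near the jumps of $c.$ I would handle this by revisiting the Pr\"ufer-type substitution used in the Lipschitz proof of Proposition~\ref{prop-minam:1bis}, where the estimate $|\alpha'/\alpha|\leq \tfrac12(1+c_M/\varepsilon)|c'|/c$ led to~\eqref{ineq-intermedzone:11}. In the BV setting $c'$ becomes a Stieltjes measure $dc$ of total mass $\leq V,$ and the identity $\frac{d}{dt}(w_1^2+w_2^2)=-2(\alpha'/\alpha)w_2^2$ must be read as an integration-by-parts against $dc.$ The uniform variation bound $TV(c)\leq V$ then plays exactly the role that $\Vert c'\Vert_{L^\infty}$ played in the Lipschitz case, and the weak-$*$ convergence $dc_n\to dc$ (of measures of uniformly bounded mass) transfers the uniform energy estimate to the limit.

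Once the uniform minimal amplitude hypothesis is in hand, the $L^2$ non-concentration estimates~\eqref{eqnonconcgeneral-1} and \eqref{eqnonconcgeneral} follow directly from Theorem~\ref{thmcyirreg-minamp}, with the constant $\mathfrak{f}$ depending only on $\varepsilon,\,c_m,\,c_M,\,b-a$ and $\mathfrak{r}_{V,\varepsilon}.$ Finally, for \eqref{eqnoncontrateab-1} one uses the tensor structure $v_\lambda(x',y)=\phi_k(x')u_{\lambda,k}(y),$ so
\be
\Vert v_\lambda\Vert^2_{L^2(\omega)}=\Vert\phi_k\Vert^2_{L^2(\omega')}\,\Vert u_{\lambda,k}\Vert^2_{L^2(a,b)},\qquad \Vert v_\lambda\Vert^2_{L^2(\Omega)}=1.
\ee
The non-concentration hypothesis on $\set{\phi_k}_{k\geq 1}$ in $\Omega'\setminus\omega'$ provides a uniform lower bound on $\Vert\phi_k\Vert^2_{L^2(\omega')},$ which combined with~\eqref{eqnonconcgeneral} yields the desired constant $\mathfrak{g}>0,$ uniform over $\mathscr{K}_V$ and $\mathscr{A}^c_\varepsilon.$
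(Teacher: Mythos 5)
Your overall strategy—approximate $c$ by piecewise constant functions with the same variation bound, invoke the uniformity of Proposition~\ref{propcyconst}/Corollary~\ref{corunifcV}, and pass to the limit—is the same as the paper's. The mechanism you use for convergence of eigenpairs (strong resolvent convergence of the reduced forms, plus the integrated first-order identity to upgrade to uniform convergence of $u_n$ and $c_nu_n'$) differs from the paper's, which instead fixes the initial slope, solves the initial value problem~\eqref{eqwinitz-a} with a complex spectral parameter, and tracks the eigenvalue by the contour integral formulas~\eqref{eqf'flambda}--\eqref{eqfn'fnlambda}; both routes are viable, and the paper's has the advantage of not needing the extra step of recovering uniform convergence from $L^2$ convergence. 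Your digression about reading $\alpha'/\alpha$ against the Stieltjes measure $dc$ is not needed once the approximation argument is carried out at the level of the pointwise inequality $u_n^2+(c_nu_n')^2\geq S^2$, which passes to the limit directly.

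There is, however, a genuine gap in your deduction of~\eqref{eqnonconcgeneral-1} and~\eqref{eqnoncontrateab-1}. You assert that once $\mathfrak{r}_{V,\varepsilon}>0$ is established, these follow ``directly'' from Theorem~\ref{thmcyirreg-minamp}. But that theorem only yields $\int_a^b u^2\geq d$ for $\lambda>\lambda_0$, where $\lambda_0$ depends on $\varepsilon,c_m,c_M,b-a$; the estimate \eqref{eqnonconcgeneral} you do get is correspondingly restricted to $\lambda>\lambda_0$. To obtain the fully uniform bound \eqref{eqnonconcgeneral-1} over all of $\mathscr{A}^c_\varepsilon$ and all $c\in\mathscr{K}_V$, one must also control the (finitely many, but $c$-dependent) spectral pairs with $\lambda\leq\lambda_0$. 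Since these excluded eigenfunctions change as $c$ ranges over $\mathscr{K}_V$, there is no finite list to inspect, and a separate argument is required. The paper handles this by contradiction: if a sequence $c^{(n)}\in\mathscr{K}_V$ and normalized eigenfunctions $u_n$ with $\lambda_n\in((c_M+\varepsilon)\mu_k^2,\lambda_0)$ had $\int_a^b u_n^2\to 0$, then the equation forces $\int_a^b(c^{(n)}u_n')^2\to 0$ as well, contradicting the uniform lower bound $u_n^2+(c^{(n)}u_n')^2\geq\mathfrak{r}_{V,\varepsilon}^2$. Your proposal omits this step, so the uniformity over $\mathscr{K}_V$ in the last two bullet points is not actually proved.

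A minor technical point: Claim~\ref{claim:4-new} provides \emph{uniform} convergence $c^{(n)}\to c$ (BV functions on an interval can be uniformly approximated by step functions), which streamlines the limit passage; your reliance on $L^1$ convergence is workable thanks to the uniform boundedness of $u_n$, but it demands the extra care you sketched and does not simplify anything.
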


                The proof consists of approximating $c(y)$ by a sequence of piecewise constant functions and using the results of Proposition ~\ref{propcyconst} and Corollary ~\ref{corunifcV}. The approximation procedure is based on the following result  ~\cite[pp. 12-13]{Bres:1}.
   \begin{claim}
\label{claim:4-new}
Suppose that $c(y)\in\mathscr{K}$  and is of total variation $V>0.$
 Then there exists a sequence of piecewise constant functions $\set{c^{(n)}(y)}_{n=1}^\infty,$ so that

 \begin{itemize}
 \item \begin{equation}
 \label{constantBV:1-unif}
 \lim\limits_{n\to\infty} c^{(n)}(y)=c(y),\quad \mbox{uniformly in }\,\,y\in[0,H]. \ee
 \item \begin{equation}
 \label{constantBV:1-new}
 \sup \set{ TV(c^{(n)})}_{n=1}^\infty \leq V= TV(c).\ee
 \item \begin{equation}
 \label{constantBV:2-new}
 0<c_m={\rm ess}\, \inf \,\,c(y)\leq c_m^{(n)}={\rm ess}\, \inf\, c^{(n)}(y)\leq c_M^{(n)}={\rm ess}\,\sup \,\, c^{(n)}(y)\leq c_M={\rm ess}\,\sup \,\,c(y),\quad n=1,2,\ldots\ee
 \end{itemize}

\end{claim}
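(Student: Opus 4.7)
The plan is to construct the approximating sequence by a partition-based step-function approximation that equidistributes the total variation across $[0,H],$ a standard technique in BV theory (essentially the construction referenced in the claim to Bressan's book). The key tool is the monotone non-decreasing function $V_c(y):=TV_{[0,y]}(c),$ which satisfies $V_c(0)=0,$ $V_c(H)=V,$ and $TV_{[a,b]}(c)=V_c(b)-V_c(a)$ for $0\leq a<b\leq H.$ In particular, the oscillation of $c$ over any subinterval $[a,b]$ is bounded by $V_c(b)-V_c(a),$ which is the mechanism that will yield both the uniform convergence and the bound on $TV(c^{(n)}).$

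For each $n\geq 1,$ I would choose a finite partition $0=y_0^{(n)}<y_1^{(n)}<\cdots<y_{N_n}^{(n)}=H$ such that $V_c$ grows by at most $V/n$ between consecutive partition points; concretely, picking each $y_k^{(n)}$ to satisfy $V_c(y_k^{(n)}-)\leq kV/n\leq V_c(y_k^{(n)}+)$ (with a rightward perturbation whenever $V_c$ has a jump of size exceeding $V/n,$ so that consecutive partition points remain distinct). This arrangement guarantees $TV_{[y_{k-1}^{(n)},y_k^{(n)}]}(c)\leq 2V/n$ for each $k.$ On each open subinterval $I_k:=(y_{k-1}^{(n)},y_k^{(n)})$ I would select a representative point $\xi_k\in I_k$ lying in the full-measure subset where $c_m\leq c(\xi_k)\leq c_M,$ and set $c^{(n)}(y):=c(\xi_k)$ on $I_k$ (extending to the partition points themselves by left-continuity, which is immaterial).

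The three required properties then follow essentially by inspection. \emph{Uniform convergence}: for $y\in I_k,$ $|c^{(n)}(y)-c(y)|=|c(\xi_k)-c(y)|\leq TV_{I_k}(c)\leq 2V/n\to 0,$ uniformly in $y.$ \emph{Total variation bound}: since $c^{(n)}$ is piecewise constant with values $c(\xi_k)$ on $I_k,$ one has $TV(c^{(n)})=\sum_{k=1}^{N_n-1}|c(\xi_{k+1})-c(\xi_k)|,$ which is the variation of $c$ along the ordered finite sequence $\xi_1<\xi_2<\cdots<\xi_{N_n}\subset[0,H]$ and is therefore dominated by $TV(c)=V$ by definition of the total variation. \emph{Range preservation}: each value of $c^{(n)}$ equals $c(\xi_k)$ for some $\xi_k$ chosen so that $c_m\leq c(\xi_k)\leq c_M,$ giving $c_m\leq c_m^{(n)}\leq c_M^{(n)}\leq c_M.$

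The main obstacle I expect is the combinatorial bookkeeping around the jumps of $c$ (equivalently of $V_c$): whenever $c$ has a jump of size $\sigma>V/n$ at some $y^*,$ several target levels $kV/n$ fall within the same jump of $V_c,$ forcing a clustering of partition points there. One must then place these points immediately to the right of $y^*$ while preserving strict ordering and the bound $TV_{I_k}(c)\leq 2V/n$ on each subinterval. Since $\sum_i\sigma_i\leq V,$ there are at most $n$ jumps of size exceeding $V/n,$ so only finitely many perturbations are required at each level $n,$ and they do not affect the uniform asymptotics. Apart from this detail, the construction is elementary and the three listed properties are direct consequences of the equidistribution of variation across the partition.
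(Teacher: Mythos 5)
Your construction is correct, and it is essentially the standard one: the paper does not prove this claim itself but simply cites \cite[pp.~12--13]{Bres:1}, and the equidistribution-of-variation sampling argument you give is precisely the construction underlying that reference. The only point worth tightening is that $c\in\mathscr{K}$ is an $L^\infty$ function, so one should fix a good (say right-continuous) representative at the outset, so that its pointwise variation equals the essential variation $V$, its values lie in $[c_m,c_M]$ everywhere, and the convention for the values of $c^{(n)}$ at the partition points matches that of $c$; with that understood, your three verifications (oscillation bound $2V/n$ on each open subinterval, domination of $\sum_k|c(\xi_{k+1})-c(\xi_k)|$ by $TV(c)$, and range preservation) all go through.
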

 Recall (see Introduction) that we denote
 $$\widetilde{c}(x',y)=c(y),\,\,\,\,\widetilde{c^{(n)}}(x',y)=c^{(n)}(y),\quad x=(x',y)\in \Omega=\Omega'\times[0,H],$$
 with associated operators
$$A=-\nabla\cdot(\widetilde{c}\,\nabla),\,\,\quad A^{(n)}=-\nabla\cdot(\widetilde{c^{(n)}}\,\nabla).$$
 For the Laplacian $-\Delta_{x'}$ acting in $L^{2}(\Omega')$ with domain $H^2(\Omega')\cap H^1_0(\Omega')$, we denote by $(\mu_k^2,\phi_k)_{k\geq 1}$ the sequence of normalized eigenfunctions and their associated eigenvalues, ordered by $\mu_k\leq \mu_{k+1}.$ \\
 The eigenfunctions of $A$ (resp. $A^{(n)}$) are
 $$v_\lambda(x)=u(y;\lambda,k)\phi_k(x'),\quad v_\lambda^{(n)}(x)=u^{(n)}(y;\lambda,k)\phi_k(x'),$$
 where $u(y;\lambda,k)$ (resp. $u^{(n)}(y;\lambda,k)$) satisfies Equation
         ~\eqref{equation-introduction:1-ng} (resp. ~\eqref{equation-introduction:1-ng} with $c,\,p$ replaced by $c^{(n)},\,p^{(n)}$) and they are normalized in $L^{2}((0,H), dy).$ 

         We consider eigenfunctions associated with spectral pairs  $(\mu_k,\lambda)\in\mathscr{A}^{c}_\eps.$

           The following perturbation lemma  is at the basis of the proof of the theorem. We postpone its proof to the end of this section, following the proof of the theorem. Note that in this lemma no assumption is needed concerning the total variations of the involved functions.

         \begin{lem}[ \textbf{Convergence of eigenvalues and eigenfunctions}]
\label{lem-eigenfunction:1-new}
Let $\set{c^{(n)}}_{n=1}^\infty\subseteq \mathscr{K}$ converge uniformly (in $[0,H]$) to $c(y).$ Let $A^{(n)}$ and $A$ be the corresponding operators. Let $\lambda>0$ be an eigenvalue of $A,$ and let $k\in \mathbb{N}^\ast$ so that $(\mu_k,\lambda)\in\mathscr{A}^{c}_\eps.$ Let $u(y;\lambda,k)\phi_k(x')$ be the associated normalized eigenfunction.  Then there exist $N>0$ and a sequence of eigenvalues $\set{\lambda^{(n)}}_{n=N}^\infty$ of $\set{A^{(n)}}_{n=N}^\infty$ (in fact, of $\set{A^{(n)}_k}_{n=N}^\infty$), with associated normalized eigenfunctions $\set{u^{(n)}(y;\lambda^{(n)},k)\phi_{k}(x')}_{n=N}^\infty$ such that
  \begin{equation}
  \label{eq-conv-eigen}
  \begin{array}{l}
  (i)\hspace{50pt} \,\lim_{n\to\infty}\lambda^{(n)}=\lambda,\\
   (ii)\hspace{48pt} \lim_{n\to\infty}u^{(n)}(\cdot;\lambda^{(n)},k)
   =u(\cdot;\lambda,k) \,\,\mbox{uniformly in}\,\,[0,H],  \\ 
   (iii)\hspace{48pt}\lim_{n\to\infty}c^{(n)}(y)( u^{(n)})'(\cdot;\lambda^{(n)},k)
   =c(y) u'(\cdot;\lambda,k) \,\,\mbox{uniformly in}\,\,[0,H].
   \end{array}
  \end{equation}
 \end{lem}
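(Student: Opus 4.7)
The plan is to proceed in three stages: establish eigenvalue convergence via a min--max argument applied to the reduced operators $A_k^{(n)}$ on $L^2(0,H)$; extract subsequential uniform limits of the normalized eigenfunctions $u^{(n)}$ and of the quasi-derivatives $\tilde u^{(n)}:=c^{(n)}(u^{(n)})'$ from an a priori $H^1$-bound combined with the ODE itself; and finally identify the limit with $u(\cdot;\lambda,k)$ using simplicity of the one-dimensional Sturm--Liouville spectrum.

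For (i), I would consider the closed, symmetric, positive quadratic forms
$$\mathfrak{a}_k^{(n)}(u,u)=\int_0^H c^{(n)}(y)\,|u'(y)|^2\,dy+\mu_k^2\int_0^H c^{(n)}(y)\,|u(y)|^2\,dy,\quad u\in H^1_0(0,H),$$
which share a common form domain. Since $c_m\le c^{(n)}\le c_M$ and $c^{(n)}\to c$ uniformly on $[0,H]$ by Claim \ref{claim:4-new}, the ratio $\mathfrak{a}_k^{(n)}(u,u)/\mathfrak{a}_k(u,u)$ tends to $1$ uniformly on $H^1_0(0,H)\setminus\set{0}$, and each $A_k^{(n)}$ has compact resolvent. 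The min--max characterisation then yields convergence of the $\ell$-th eigenvalue of $A_k^{(n)}$ to that of $A_k$ for every $\ell$. Writing $\lambda=\beta_{k,\ell}$, I set $\lambda^{(n)}=\beta_{k,\ell}^{(n)}$.

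For (ii) and (iii), the identity $\mathfrak{a}_k^{(n)}(u^{(n)},u^{(n)})=\lambda^{(n)}$ together with $c^{(n)}\ge c_m$ delivers a uniform $H^1_0(0,H)$-bound on $\set{u^{(n)}}$, so Arzela--Ascoli via the embedding into $C^{1/2}([0,H])$ yields, along a subsequence, $u^{(n)}\to u^*$ uniformly on $[0,H]$ for some $u^*\in H^1_0(0,H)$. Rewriting \eqref{equation-introduction:1-ng} as $(\tilde u^{(n)})'=(c^{(n)}\mu_k^2-\lambda^{(n)})u^{(n)}$, the right-hand side is uniformly bounded in $L^\infty(0,H)$, hence $\tilde u^{(n)}$ is uniformly bounded in $W^{1,\infty}(0,H)$ and a further subsequence converges uniformly to a Lipschitz function $w^*$. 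Since $c^{(n)}\ge c_m>0$ and $c^{(n)}\to c$ uniformly, one has $(u^{(n)})'=\tilde u^{(n)}/c^{(n)}\to w^*/c$ uniformly on $[0,H]$, forcing $w^*=c(u^*)'$. Passing to the limit in the integrated form
$$\tilde u^{(n)}(y)=\tilde u^{(n)}(0)+\int_0^y\bigl(c^{(n)}(s)\mu_k^2-\lambda^{(n)}\bigr)u^{(n)}(s)\,ds,$$
together with the Dirichlet boundary conditions, gives $u^*\in D(A_k)$ with $A_ku^*=\lambda u^*$. Since the 1D Sturm--Liouville operator with Dirichlet conditions has only simple eigenvalues, $u^*$ is a scalar multiple of $u(\cdot;\lambda,k)$; the normalization $\|u^*\|_{L^2}=\lim\|u^{(n)}\|_{L^2}=1$ forces $u^*=\pm u(\cdot;\lambda,k)$, and fixing the sign of each $u^{(n)}$ appropriately secures $u^*=u(\cdot;\lambda,k)$. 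Uniqueness of the limit then upgrades the subsequential convergences of $u^{(n)}$ and of $\tilde u^{(n)}$ to full sequential convergence.

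The principal obstacle is that the $H^1$-bound alone does not directly give uniform convergence of the quasi-derivative $c^{(n)}(u^{(n)})'$, which is precisely what (iii) demands. Overcoming it relies on a two-step mechanism: the ODE promotes $\tilde u^{(n)}$ to a $W^{1,\infty}$-bounded sequence, and the uniform positivity of the limit $c(y)$ permits a safe division by $c^{(n)}$ after passing to the limit. This is exactly where the uniform (rather than merely $L^1$ or a.e.) convergence $c^{(n)}\to c$ supplied by Claim \ref{claim:4-new} is indispensable.
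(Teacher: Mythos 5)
Your argument is correct, but it follows a genuinely different route from the paper. The paper fixes the initial data $w(0;z)=0$, $c(0)w'(0;z)=c(0)u'(0;\lambda,k)$ and treats the shooting function $f(z)=w(H;z)$ as an analytic function of a complex spectral parameter $z$; the perturbed eigenvalue $\lambda^{(n)}$ is then extracted from the residue formulas $\frac{1}{2\pi i}\int_{|z-\lambda|=r}z\frac{{f^{(n)}}'(z)}{f^{(n)}(z)}dz$ after an Arzela--Ascoli argument shows $f^{(n)}\to f$, and the eigenfunctions are obtained by normalizing the IVP solutions. You instead get (i) from the min--max principle applied to the forms $\mathfrak{a}_k^{(n)}$ on the common form domain $H^1_0(0,H)$, which is more elementary (no complex analysis) and even yields the quantitative rate $|\lambda^{(n)}-\lambda|\le c_m^{-1}\Vert c^{(n)}-c\Vert_\infty\,\lambda$; you then recover (ii)--(iii) by compactness of the normalized eigenfunctions and identify the limit via simplicity of the Dirichlet Sturm--Liouville spectrum, whereas the paper's identification comes for free from the fixed initial data. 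The paper's complex-analytic scheme is more local (it isolates one simple zero of $f$ in a small disk and would not need a global variational characterization of the eigenvalue's index), but both are valid here since the $A_k^{(n)}$ are uniformly coercive with compact resolvent. Two small points to tighten: the $L^\infty$ bound on the right-hand side of $(\tilde u^{(n)})'=(c^{(n)}\mu_k^2-\lambda^{(n)})u^{(n)}$ bounds only the derivative of $\tilde u^{(n)}$, so you should add the one-line observation that $\Vert\tilde u^{(n)}\Vert_{L^\infty}\le H^{-1/2}\Vert\tilde u^{(n)}\Vert_{L^2}+H\Vert(\tilde u^{(n)})'\Vert_{L^\infty}$ with $\Vert\tilde u^{(n)}\Vert_{L^2}\le c_M\Vert(u^{(n)})'\Vert_{L^2}$ to complete the $W^{1,\infty}$ bound; and the sign normalization should be made explicit (e.g.\ choose $u^{(n)}$ so that $\int_0^H u^{(n)}u\,dy\ge0$) so that the subsequential limits all coincide and the full-sequence convergence follows.
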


\begin{proof}[\textbf{Proof of Theorem ~\ref{prop-minam:1-new}}] Pick an eigenvalue $\lambda \geq (c_M+\eps)\mu_k^2.$

Let $\set{c^{(n)}(y)}_{n=1}^\infty$ be a sequence of piecewise constant functions as in Claim ~\ref{claim:4-new}. Let $\set{u^{(n)}(y;\lambda^{(n)},k)}_{n=N}^\infty$ be the corresponding sequence as in Lemma ~\ref{lem-eigenfunction:1-new}. Note that the convergence ~\eqref{eq-conv-eigen}(i) implies that, for sufficiently large index $N$ the condition $\lambda^{(n)}>(c_M+\frac{\eps}{2})\mu_{k}^2$ holds for $n>N.$ In view of the uniform bound ~\eqref{constantBV:1-new} on total variations we can invoke Corollary ~\ref{corunifcV} to get
\be\label{equnifrnj}
(\mathfrak{r}_{c,\eps}^{app})^2:= \inf_{N\leq n<\infty}\inf_{y\in [0,H]}\Big(u^{(n)}(y;\lambda^{(n)},k)^2+ (c^{(n)}(y)u^{(n)'}(y;\lambda^{(n)},k))^2\Big)>S^2,
\ee
where $S>0$ depends only on $\eps,\,V,\,c_m,\,c_M$ (see ~\eqref{eqminampconst}).

The convergence result in Lemma ~\ref{lem-eigenfunction:1-new} entails uniform convergence of both the functions and their derivatives, hence
\be\label{eqinfuymuk}u(y;\lambda,k)^2+(c(y)u'(y;\lambda,k))^2\geq S^2,\quad y\in[0,H].
\ee
The estimate ~\eqref{equation-definition-cybv} now follows from the fact that, in view of Corollary ~\ref{corunifcV}, the estimate ~\eqref{equnifrnj} holds uniformly for all approximating sequences for any solution $u(y;\lambda,k)$ associated with $(\mu_k,\lambda)\in \mathscr{A}^{c}_\eps.$ In fact, we get the uniform estimate  ~\eqref{equation-definition-cybv-unif} since $\mathfrak{r}_{c,\eps}$ depends only on $V.$

We now turn to the non-concentration statement ~\eqref{eqnonconcgeneral}. The general Theorem ~\ref{thmcyirreg-minamp} ensures the existence of $d>0,\,\lambda_0>0$ depending on $\eps,\,c_M,\,c_m,\, b-a,\,\mathfrak{r}_{c,\eps},$          such that

              $$  \int_{a}^{b}\, u(y;\lambda,k)^2\,dy\geq d,\quad
                 \lambda>\lambda_0.$$
                 Due to ~\eqref{equation-definition-cybv-unif} this estimate is uniformly valid (with the same $d,\lambda_0$) for all $c(y)\in\mathscr{K}$ such that $TV(c)\leq V.$

However, for every $c(y)\in\mathscr{K}$ with $TV(c)\leq V$ there are finitely many eigenfunctions that are excluded, namely, those with $\lambda<\lambda_0.$ Clearly, these eigenfunctions vary with $c(y).$ We now show that they can be included in the estimate ~\eqref{eqnonconcgeneral-1}. The price to be paid is that the lower bound $\mathfrak{g}$ depends in a more delicate way on the various parameters (and not only on $\eps,\,c_M,\,c_m,\, b-a,\,\mathfrak{r_{c,\eps}}$).

To obtain a contradiction let $\set{c^{(n)}(y),\,\,TV(c^{(n)})\leq V}_{n=1}^\infty\subseteq\mathscr{K}.$
Let $\set{\lambda_n}_{n=1}^\infty$ be a sequence of eigenvalues with associated normalized eigenfunctions $\set{u_n(y;\lambda_n,k)}_{n=1}^\infty$ satisfying
  \be\label{eqnonguide-n1}
        (c^{(n)}(y)u_n'(y;\lambda_n,k))'+\Big(\lambda_n - c^{(n)}(y)\mu^2_{k}\Big)u_n(y;\lambda_n,k)=0.
\ee
  Assume further that
  $$(c_M+\eps)\mu^2_{k}<\lambda_n<\lambda_0,\quad n=1,2,\ldots$$
  Suppose that for some interval $(a,b)\subseteq (0,H)$ and some subsequence (we do not change indices)
  \be\label{equntozero}
  \lim\limits_{n\to\infty} \int_a^bu_n(y;\lambda_n,k)^2dy=0.
  \ee
  The sequence is normalized and clearly the coefficients $\set{\lambda_n - c^{(n)}(y)\mu^2_{k},\,\,n=1,2,\ldots}$ are uniformly bounded, hence from ~\eqref{eqnonguide-n1} we infer
  \be\label{equntozerou'}
  \lim\limits_{n\to\infty} \int_a^b\Big(c^{(n)}(y)u_n'(y;\lambda_n,k)\Big)^2dy=0.
  \ee
  This is a contradiction to the fact (see ~\eqref{equation-definition-cybv-unif})
  $$u_n(y;\lambda_n,k)^2+(c^{(n)}(y)u_n'(y;\lambda_n,k))^2\geq \mathfrak{r}_{V,\eps}^{2},\,\,y\in [0,H],\quad n=1,2,\ldots$$
The estimate ~\eqref{eqnoncontrateab-1} follows from ~\eqref{eqnonconcgeneral-1}.
\end{proof}


\begin{proof}
[\textbf{Proof of Lemma ~\ref{lem-eigenfunction:1-new}}]
We use the direct sum representation ~\eqref{equation-spectralstructure:1} both for the operator $A$ and the operators $A^{(n)}.$ Since the eigenfunctions $\set{\phi_k(x')}$ do not depend on the index $n,$ the reduced operators  $A_k^{(n)}$ (see ~\eqref{eqAkofy}) are given by

$$A_{k}^{(n)}=c^{(n)}(y) \mu_{k}^{2} - \frac{d}{dy}c^{(n)}(y)\frac{d}{dy}, k = 1,2,\ldots,\mbox{ with }D(A_k^{(n)})=\set{u\in H^1_0(0,H), c^{(n)}u'\in H^1(0,H)},\quad n=1,2,\ldots.
$$
Fix $k\in \mathbb{N}^*$ so that $\lambda$ is an eigenvalue of $A_k.$ The corresponding (reduced) eigenfunction $u(\ylambk)$ satisfies the equation (see ~\eqref{equation-introduction:1-ng})
$$
 (c(y)\mu_k^2-\lambda)u(\ylambk) - \frac{d}{dy}c(y)\frac{d}{dy}u(\ylambk)= 0, \quad u(\ylambk)(0) = u(\ylambk)(H) = 0.
$$
Let $B(\lambda,\delta)\subseteq  \mathbb{C}$ be the disk of radius $\delta$ centered at $\lambda$
   and consider the following linear initial value problem, with a complex parameter $z\in {\overline{B(\lambda,\delta)}},$
  \be\label{eqwinitz-a}
  \begin{cases}(c(y)w'(y;z))' + (z-c(y)\mu^{2}_{k})w(y;z)= 0,\\
   w(0;z) = 0,\,\,c(0)w'(0;z)=c(0)u'(0;\lambda,k).
   \end{cases}
  \ee
     Denoting $w_1(y;z)=w(y;z),\,\,w_2(y;z)=c(y)w_1'(y;z)$ we have
     \be\label{eqw1w2}
     \left(
       \begin{array}{c}
         w_1 \\
         w_2\\
       \end{array}
     \right)'=\left(
                \begin{array}{cc}
                  0 & c(y)^{-1} \\
                  -(z-c(y)\mu_k^2) & 0 \\
                \end{array}
              \right)\cdot\left(
                            \begin{array}{c}
                              w_1 \\
                              w_2 \\
                            \end{array}
                          \right).
     \ee

      For every $y\in [0,H]$ the function $w(y;z)$ is analytic as a function of $z$  ~\cite[Chapter 1, Th.8.4]{coddington} and this is true in particular for $f(z):=w(H;z).$ Note that $z$ is an eigenvalue of $A$ if and only if $f(z)=0,$
     since  if $w$ is an eigenfunction then so is $aw,$ for any $a\neq 0.$ Clearly $f(\lambda)=0.$ This is the only zero of $f$ in  $B(\lambda,\delta)$ for sufficiently small $\delta>0,$ since $\lambda$ is an isolated eigenvalue of
      $A_k.$

      By  standard formulas for zeros of analytic functions, since $\lambda$ is a simple zero,
      \be\label{eqf'flambda}
       1=\frac{1}{2\pi i}\int\limits_{|z-\lambda|=r}\frac{f'(z)}{f(z)}dz,\quad \lambda=\frac{1}{2\pi i}\int\limits_{|z-\lambda|=r}z\frac{f'(z)}{f(z)}dz.
      \ee

      Replacing in ~\eqref{eqwinitz-a} $c(y)$ by  $c^{(n)}(y)$ (and retaining the initial conditions) we obtain solutions $w^{(n)}(y;z).$ From  Equation ~\eqref{eqw1w2} and the boundedness of the coefficients we infer that the functions 
            $$\set{(w^{(n)})'(y;z),\,\,\frac{d}{dy}(c^{(n)}(y){w^{(n)}}'(y;z)),\,\,z\in {\overline{B(\lambda,\delta)}} }_{n=1}^\infty$$ are uniformly bounded   in $y\in[0,H].$ Consequently the functions
              $\set{w^{(n)}(y;z),\,\,c^{(n)}(y)(w^{(n)})'(y;z),\,\,z\in {\overline{B(\lambda,\delta)}} }_{n=1}^\infty$ are uniformly Lipschitz continuous for $y\in [0,H].$
       
        Pick $z\in B(\lambda,\delta)$ and  let $\set{z^{(n)}}_{n=1}^\infty\subseteq B(\lambda,\delta)$ be a sequence converging to $z.$ The Arzela-Ascoli  theorem yields the existence of a subsequence $\set{w^{(n_j)}}_{j=1}^\infty$ and a limit function $\tilde{w}(y;z)$ such that
      \be\label{eqconvwnj}
      \lim\limits_{j\to\infty}w^{(n_j)}(y;z^{(n_j)})=\tilde{w}(y;z),\quad \lim\limits_{j\to\infty}c^{(n_j)}(y)w^{(n_j)'}(y;z^{(n_j)})=c(y)\tilde{w}'(y;z),\,\,\mbox{uniformly in}\,\,[0,H].
      \ee
         It follows that $\tilde{w}(y;z)$ satisfies  (first in distribution sense) the equation  ~\eqref{eqwinitz-a} with the same initial data, so by uniqueness
        $\tilde{w}(y;z)=w(y;z).$ In particular, since  all converging  subsequences have the same limit,~\eqref{eqconvwnj} can be replaced by
         \be\label{eqconvwnn}
      \lim\limits_{n\to\infty}w^{(n)}(y;z^{(n)})=w(y;z),\quad \lim\limits_{n\to\infty}c^{(n)}(y)w^{(n)'}(y;z^{(n)})=c(y)w'(y;z),\,\,\mbox{uniformly in}\,\,[0,H],
      \ee

         Setting $f^{(n)}(z)=w^{(n)}(H;z)$ we obtain from ~\eqref{eqf'flambda}, in view of the convergence ~\eqref{eqconvwnn} and for sufficiently large $n,$
          \be\label{eqfn'fnlambda}
       1=\frac{1}{2\pi i}\int\limits_{|z-\lambda|=r}\frac{{f^{(n)}}'(z)}{f^{(n)}(z)}dz,\quad \lambda^{(n)}=\frac{1}{2\pi i}\int\limits_{|z-\lambda|=r}z\frac{{f^{(n)}}'(z)}{f^{(n)}(z)}dz,
      \ee
          where
           $\lim\limits_{n\to\infty}\lambda^{(n)}=\lambda.$

          In addition, $\lambda^{(n)}$ (for sufficiently large $n$) is an eigenvalue of $A_k^{(n)}$ (and in particular is real) and $w^{(n)}(y;\lambda^{(n)})$ is an associated (not necessarily normalized) eigenfunction.
          To conclude the proof of the lemma we take $$u^{(n)}(y;\lambda^{(n)},k)=
          \frac{w^{(n)}(y;\lambda^{(n)})}{\Big[\int_0^H|w^{(n)}(y;\lambda^{(n)})|^2dy\Big]^\frac12}.$$

\end{proof}


\end{document}